\newtheorem{thm}{Theorem}[section]
\newtheorem{lem}[thm]{Lemma}
\newtheorem{cor}[thm]{Corollary}
\newtheorem{rmk}{Remark}
\newtheorem{defn}{Definition}
\begin{document}
%
% paper title
% Titles are generally capitalized except for words such as a, an, and, as,
% at, but, by, for, in, nor, of, on, or, the, to and up, which are usually
% not capitalized unless they are the first or last word of the title.
% Linebreaks \\ can be used within to get better formatting as desired.
% Do not put math or special symbols in the title.
\title{Dynamics of stochastic approximation 
with iterate-dependent Markov noise under verifiable conditions in compact state space with the stability of iterates not ensured}
%
%
% author names and IEEE memberships
% note positions of commas and nonbreaking spaces ( ~ ) LaTeX will not break
% a structure at a ~ so this keeps an author's name from being broken across
% two lines.
% use \thanks{} to gain access to the first footnote area
% a separate \thanks must be used for each paragraph as LaTeX2e's \thanks
% was not built to handle multiple paragraphs
%

\author{Prasenjit Karmakar and Shalabh Bhatnagar% <-this % stops a space
\thanks{Prasenjit Karmakar is with the Department of Computer Science and
Automation, Indian Institute of Science Bangalore (e-mail: prasenjit@iisc.ac.in, Contact No:
+917337617564.}
\thanks{Shalabh Bhatnagar is with the Department of Computer Science and
Automation and the Robert Bosch Centre for Cyber Physical Systems, Indian Institute of Science Bangalore
 (e-mail: shalabh@iisc.ac.in, Phone: 91(80) 2293-2987 Fax:
91(80) 2360-2911)}}
%of Electrical and Computer Engineering, Georgia Institute of Technology, Atlanta,
%GA, 30332 USA e-mail: (see http://www.michaelshell.org/contact.html).}% <-this % stops a space
%\thanks{J. Doe and J. Doe are with Anonymous University.}% <-this % stops a space
%\thanks{Manuscript received April 19, 2005; revised August 26, 2015.}}

% note the % following the last \IEEEmembership and also \thanks - 
% these prevent an unwanted space from occurring between the last author name
% and the end of the author line. i.e., if you had this:
% 
% \author{....lastname \thanks{...} \thanks{...} }
%                     ^------------^------------^----Do not want these spaces!
%
% a space would be appended to the last name and could cause every name on that
% line to be shifted left slightly. This is one of those "LaTeX things". For
% instance, "\textbf{A} \textbf{B}" will typeset as "A B" not "AB". To get
% "AB" then you have to do: "\textbf{A}\textbf{B}"
% \thanks is no different in this regard, so shield the last } of each \thanks
% that ends a line with a % and do not let a space in before the next \thanks.
% Spaces after \IEEEmembership other than the last one are OK (and needed) as
% you are supposed to have spaces between the names. For what it is worth,
% this is a minor point as most people would not even notice if the said evil
% space somehow managed to creep in.

% The paper headers
\markboth{}%
{Prasenjit \MakeLowercase{\textit{et al.}}: Dynamics of stochastic approximation 
with Markov iterate-dependent noise under verifiable conditions with the stability of the iterates not ensured }
% The only time the second header will appear is for the odd numbered pages
% after the title page when using the twoside option.
% 
% *** Note that you probably will NOT want to include the author's ***
% *** name in the headers of peer review papers.                   ***
% You can use \ifCLASSOPTIONpeerreview for conditional compilation here if
% you desire.

% If you want to put a publisher's ID mark on the page you can do it like
% this:
%\IEEEpubid{0000--0000/00\$00.00~\copyright~2015 IEEE}
% Remember, if you use this you must call \IEEEpubidadjcol in the second
% column for its text to clear the IEEEpubid mark.

% use for special paper notices
%\IEEEspecialpapernotice{(Invited Paper)}

% make the title area
\maketitle

% As a general rule, do not put math, special symbols or citations
% in the abstract or keywords.
\begin{abstract}
This paper compiles several aspects of the dynamics  
%both asymptotic and non-asymptotic properties
of stochastic approximation algorithms with \textit{Markov iterate-dependent} noise when 
the iterates are not known to be stable beforehand. We achieve the same by
extending the  \textit{lock-in probability} (i.e. 
the probability of
convergence of the iterates to a specific attractor of the limiting o.d.e.\  given
that the iterates are in its domain of attraction
after a sufficiently large number of iterations (say) $n_0$) framework to such recursions. 
Specifically, with the more restrictive assumption of Markov iterate-dependent 
noise supported on a bounded subset of the Euclidean space 
we give a lower bound for the \textit{lock-in probability}.
%With the more restrictive assumption of Markov iterate-dependent noise supported on a bounded subset of the Euclidean space,
%we recover the existing bound
%in literature for the case of  martingale difference noise.
We use these results to prove almost sure convergence of the iterates to the specified attractor
when the iterates satisfy an asymptotic tightness condition. The novelty of our approach is that if the state space of the Markov process is compact we prove almost sure convergence under much weaker assumptions compared to the work by Andrieu et al. which solves the general state space case under much restrictive assumptions. 
We also extend our single timescale results to the case where there are two separate recursions over two different timescales. This, in turn, is shown to be useful
in analyzing the \textit{tracking ability} of general adaptive algorithms.    
%To the best of our knowledge this is the first time 
%an almost sure  convergence proof for such recursion is presented 
%without assuming the stability of the iterates, however following the classic Poisson equation
%model of Metivier and Priouret which is designed keeping 
%in mind the stability  of the iterates. 
%Another important corollary of our results is
%the stability and henceforth a.s. convergence of the iterates for some common step-size sequences
%if the iterates belong to some special open set with compact closure in the domain of attraction of the local attractor
%infinitely often with probability 1.
Additionally, we show that our results can be used to derive a \textit{sample complexity estimate} of such recursions,
which then can be used for step-size selection.
\end{abstract}

% Note that keywords are not normally used for peerreview papers.
\begin{IEEEkeywords}
Markov noise, lock-in probability, sample complexity, adaptive algorithms
\end{IEEEkeywords}

% For peer review papers, you can put extra information on the cover
% page as needed:
% \ifCLASSOPTIONpeerreview
% \begin{center} \bfseries EDICS Category: 3-BBND \end{center}
% \fi
%
% For peerreview papers, this IEEEtran command inserts a page break and
% creates the second title. It will be ignored for other modes.
\IEEEpeerreviewmaketitle

\section{Introduction}

Stochastic approximation algorithms are sequential
non-parametric methods for finding a zero or minimum of a function in cases where
only the noisy observations of the function values are available. Stochastic approximation
iterates in $\mathbb{R}^d$ are given by
\begin{align}
\label{eqn1}
\theta_{n+1} = \theta_n + a(n)f(\theta_n,Y_{n+1}), n\geq 0,
\end{align}
where $\theta_0$ is the initial point, $\{\theta_n\}$ are the iterates,
%$h:\mathbb{R}^d \to \mathbb{R}^d$ is the drift function,
$\{Y_n\}$ is an $\mathbb{R}^d$-
valued `Markov iterate-dependent' noise sequence, i.e., it  satisfies
\begin{align}
P[Y_{n+1} \in A | \mathcal{F}_n] = \int_A\Pi_{\theta_n}(Y_n; dx) \ \mbox{a.s.,}~ \forall n, \label{eqn4}
\end{align}
where $\mathcal{F}_n :=$ the $\sigma$-field generated by all random variables realized till time $n$ 
and $a(n)$ is the $n$-th step-size.

It is well known that under reasonable assumptions \cite{benveniste}, \cite{benv_book}, (\ref{eqn1})
 asymptotically tracks the o.d.e.
\begin{align}
\label{eqn2}
\dot{\theta}(t)=h(\theta(t)),
\end{align}
where $h(\theta)= \int f(\theta, y)\Gamma_{\theta}(dy)$, with $\Gamma_{\theta}$ being the unique stationary distribution
of the Markov iterate-dependent process $\{Y_n\}$ for a fixed $\theta$.
Among them the most important assumption is the \textit{stability} of the iterates, i.e.,
\begin{align}
\label{stability}
\sup_n \|\theta_n\| < \infty \ a.s.
\end{align}
In literature sufficient conditions that guarantee (\ref{stability}) are available (e.g. based on a Lyapunov function \cite[Chap. 6.7]{kushner}, \cite{andrieu} etc. As mentioned in \cite{andrieu}, proving stability of the iterates
is a tedious task with the Markovian dynamic due to the noise term $f(\theta_n,Y_{n+1}) - h(\theta_n)$. 
In \cite{andrieu1}, the 
truncations
on adaptive truncation sets from \cite{fu_chen} has been extended to the case where the noise is Markov. 
It is clearly mentioned there that the procedure they follow is 
different
in some respects from the original
procedure proposed by \cite{fu_chen}. To prove that the number of re-initializations of the procedure described in 
\cite[Section 3.2]{andrieu1} is finite, they establish a bound on the probability that the $n$-th reinitialization time is 
finite in terms of the fluctuations of the noise sequence of the algorithm between successive re-initializations. Although the results are described for Markov processes taking values in a general state space (Polish) some of the assumptions therein are restrictive such as the assumptions on global Lyapunov function (see \textbf{(A1)} in \cite{andrieu1}). Additionally, 
in order to control the fluctuations some less classical assumptions have been imposed on the transition kernel (regularity properties in $V$ and $V^p$ norm) as well 
as on the vector field $g(\cdot,\cdot)$ (see (DRI2) and (DRI3)) and the discussion thereafter. 
\begin{comment}
Further, the stability theorem  
stated in \cite[Chap 6, Theorem 9]{borkar_book}) also
%(no proof available in the literature when the noise is controlled Markov) 
requires assumptions such as
1) continuity of the transition kernel, 2) Lipschitz continuity of $f$ in the first component \textit{uniformly} w.r.t 
the second and 3) $f$ being jointly continuous.
\end{comment}

In
this work, we investigate the dynamics of stochastic approximation with  Markov iterate-dependent noise 
when (\ref{stability}) is not known to be satisfied beforehand. We achieve the same by 
extending the  \textit{lock-in probability} framework of Borkar \cite{lock_in_original} to such a recursion, 
leading in turn to the following:
\begin{enumerate}
 \item Let $H$ be an asymptotically stable attractor of (\ref{eqn2}) and $G$ be its domain of attraction. 
If $\{\theta_n\}$ is \textit{asymptotically tight} (which is a much weaker condition than (\ref{stability})) 
and $\liminf_n P(\theta_n \in G) =1,$ then $P(\theta_n \to H) =1$ under a reasonable 
set of assumptions seem to be routinely  satisfied in applications such as reinforcement learning \cite{borkar_meyn}.
%Thus under some restrictive assumptions we prove almost sure convergence of such recursions 
%without assuming stability of the iterates, however, following the classic Poisson equation model of 
%Metivier and Priouret \cite{benveniste} which is designed keeping 
%in mind the stability  of the iterates.
To the best of our knowledge this is the first time 
an almost sure  convergence proof for such a recursion is presented 
without assuming stability of the iterates, however following the classic Poisson equation
model of Metivier and Priouret \cite{benveniste} for such recursion which is designed keeping 
in mind the stability  of the iterates.
%in mind the stability of the iterates. To our knowledge this has not been done before. 
Additionally, a  simple test for asymptotic tightness is also provided. Our tightness condition does not assume existence of a quadratic Lyapunov function as with \cite{sameer} as they work mainly when the vector field is linear. Also, we show that for linear stochastic approximation our tightness condition always gets satisfied in the case of a finite state Markov chain.
%We also show that the first condition as well as other assumptions is easily verifiable compared 
%to many other sufficient conditions for stability and
%convergence for such recursions in literature \cite[p~165, p~192]{kushner}).
 \item We show that under some reasonable assumptions for common step-size sequences such as
$\{\frac{1}{n^k}\}, \frac{1}{2} < k \leq 1$ and $\frac{1}{n (\log n)^{k}}, k\leq 1$, if
the iterates belong to some special open set with compact closure in the domain of attraction of the local attractor
infinitely often w.p. 1, then the iterates are stable and converge a.s. to the local attractor.
 \item We show that our results can be used to analyze the tracking ability of 
general (not necessarily linear) stochastic approximation driven by another ``slowly'' varying 
stochastic approximation process when the iterates are not known to be stable. This involves extending the lock-in probability results to the case with two iterates evolving along different timescales. Such results are useful in the context of 
adaptive algorithms \cite{konda_linear} as not much is known about the stability of frameworks with different 
timescales. 
Note that in two time-scale stochastic approximation the coupled o.d.e has no attractor. Also, to prove the convergence of the coupled iterates stability of the slower iterate is necessary. 
Therefore we need to consider two 
quantities describing difference (over compact time interval) 
between the algorithm and the o.d.e.,  one for the coupled algorithm/o.d.e and
another for the slower algorithm/o.d.e. This gives rise to a situation where the 
conditioning event in the martingale concentration
inequality does not belong to the first $\sigma$-field in the current collection of $\sigma$-fields (unlike in the case of  
single timescale stochastic approximation where the conditioning event always belongs to the 
first $\sigma$-field in the current collection of $\sigma$-fields \cite[p~40]{borkar_book}). 

Such results are useful in the context of 
adaptive algorithms \cite{konda_linear} as not much is known about the stability of frameworks with different 
timescales. There is some recent work \cite{dalal} that also estimates the lock-in probability for 
multiple timescales, however, under the assumption that the vector fields are ``linear''.
\item We derive a sample complexity estimate (explained later) for such a recursion.
\end{enumerate}

The motivation for lock-in probability comes from a phenomenon noticed by W.B.Arthur
in simple urn models (\cite[Chap. 1]{borkar_book}) of increasing return economics:
if occurrences
predominantly of one type tend to fetch more occurrences of the same type, then after
some initial randomness the process gets \textit{locked into}
that possibly undesirable type of occurrence.
Moreover, it is known that
under reasonable conditions, every asymptotically stable equilibrium will have a positive probability
of emerging as $\lim_{n\to \infty}\theta_n$ \cite{arthur},
while this probability is zero for unstable equilibria under mild conditions on the noise \cite{brandiere,permantle}.

With this picture in mind and to give a quantitative explanation of this phenomenon,
Borkar defined lock-in probability \cite{lock_in_original} for  iterates of the form
\begin{align}
\theta_{n+1} = \theta_n + a(n)(h(\theta_n) + M_{n+1}), \label{RM}
\end{align}
where $\{M_n\}$ constitutes martingale noise, i.e., a martingale difference sequence,
as the probability of convergence of $\theta_n$ to an asymptotically stable attractor $H$ of (\ref{eqn2})
 \textit{given} that the iterate is in a neighbourhood $B$ thereof after a \textit{sufficiently large} $n_0$, i.e.,
\begin{align}
P(\theta_n \to H | \theta_{n_0} \in B) \nonumber
\end{align}
for a compact $B \subset G$. He also found a lower bound for this quantity by studying the \textit{local} behavior of iterates in a neighborhood of the attractor.
Clearly, $n_0$ depends on the particular $H$. Specifically, under the assumption
$E[\|M_{n+1}\|^2|\mathcal{F}_n]\leq K(1+\|\theta_n\|^2)\mbox{~a.s.}$ the bound obtained is $1-O(\sum_{i\geq n_0} a(i)^2)$
and under the more restrictive condition  $\|M_{n+1}\| \leq K_0 (1+\|\theta_n\|)\mbox{~a.s.}$, a
tighter bound of $1-O(e^{-\frac{1}{\sum_{i\geq n_0} a(i)^2}})$ has been obtained \cite{lock_in_original}. 
There are recent results \cite{gugan,sameer} which obtain tighter bound under much weaker 
assumptions on martingale and step-size sequence.  

The fact that lock-in probability is not just a theoretical quantity to explain the lock-in phenomenon of information economics
was shown by Kamal \cite{sameer}. If the iterates are \textit{tight} then lock-in probability results are used in
\cite{sameer} to prove almost sure
convergence of the stochastic approximation recursion (with only martingale difference noise) to the \textit{global} attractor.
%bypassing the stability proof which may be difficult at times.

The phenomenon described earlier can be observed in reinforcement learning (RL) applications  where the
limiting o.d.e.\ has multiple equilibria, e.g., with several instances of stochastic gradient descent in machine learning.
%As many RL algorithms contain non-additive Markov iterate-dependent noise in the iterates, we need to extend the present lock-in probability estimates to the
%case where the vector field includes a Markov iterate-dependent noise.
We extend in this paper the currently available lock-in probability estimates to the case where the vector field includes a Markov iterate-dependent
noise. This is for instance the case with many reinforcement learning algorithms where Markov iterate-dependent noise arises naturally because of the Markov decision process in the background.

%Then the following algorithm will serve the purpose in all the cases:
%\begin{align}
%\label{main}
%\theta_{n+1} = \theta_n + a(n)f(\theta_n,Y_{n+1}).
%\end{align}
%In the first case one can put it into the framework of (\ref{eqn1}) whereas in the second case we need to follow the approach of
%stochastic approximation with  Markov iterate-dependent noise \cite{benveniste, borkar_} to analyze its convergence properties.

Although the recursion (\ref{eqn1}) covers most of the cases of stochastic approximation with  Markov iterate-dependent noise,
there are reinforcement learning scenarios where there can be a dependence on both the present and the next sample of the Markov iterate-dependent noise
in the vector field \cite{off-policy}. For such scenarios the general recursion is:
\begin{align}
\label{many}
\theta_{n+1} = \theta_n + a(n)f(\theta_n,Y_{n},Y_{n+1}).
\end{align}
One can write (\ref{many}) as
\begin{align}
\theta_{n+1} = \theta_n + a(n)\left[E[f(\theta_n,Y_{n},Y_{n+1})|\mathcal{F}_n] + M_{n+1}\right],\nonumber
\end{align}
where $\mathcal{F}_n=\sigma(\theta_m,Y_m,m\leq n)$ and $M_{n+1}=f(\theta_n,Y_{n},Y_{n+1})- E[f(\theta_n,Y_{n},Y_{n+1})|\mathcal{F}_n]$ is a martingale difference sequence.
Therefore, with abuse of notation, the general recursion which takes care of 
Markov iterate-dependent noise can be
described as
\begin{align}
\label{main_m}
\theta_{n+1} = \theta_n + a(n)\left[f(\theta_n,Y_{n}) + M_{n+1}\right].
\end{align}
In fact, this also covers the situation where both Markov iterate-dependent \textit{and} martingale difference noise sequences are present.
In this work, we give a lower bound on the lock-in probability estimate
%for the stochastic approximation iterates with  only Markov iterate-dependent
of iterates of the form (\ref{main_m}) using the \textit{Poisson equation} based analysis
as in \cite{benveniste, benv_book}. 
%Here we additionally
%assume that the norm of the Markov iterate-dependent process is upper bounded by a constant almost surely.
 %\item Due to the lack of the stability of the iterates
%we assume that the transition kernel does not depend on the iterates i.e. the underlying process is a un Markov iterate-dependent process
%with non-unique stationary distributions (\cite{borkar_}).
Under some assumptions in \cite{benveniste} and some further assumptions,
we get a lower bound of $1-O(e^{-\frac{C}{\sum_{i=n_0}^{\infty}a(i)^2}})$ for the recursion (\ref{main_m}),
and thus also for the special case (\ref{eqn1}). Therefore, with the more general assumption of Markov iterate-dependent noise, we recover
the \textit{same bounds} available for the setting of martingale noise \cite[p.~38]{borkar_book} although with
some additional assumptions on the Markov iterate-dependent process and step size sequence.

Very few results \cite{gen} are 
available on non-asymptotic rate of
convergence of general stochastic approximation iterates (\ref{eqn1}), see also \cite{rakhlin} for stochastic gradient descent.
But lock-in probability estimates can be used to calculate an upper bound for the sample complexity
estimate of stochastic approximation \cite[chap. 4.2]{borkar_book},\cite{sameer}.
% lock-in probability results can be used to
%derive a sample complexity estimate, i.e.
Given a desired accuracy $\epsilon >0$ and confidence $\gamma$, the 
sample complexity estimate is defined to be the minimum number of iterations
$N(\epsilon, \gamma)$ after which the iterates are within a certain neighbourhood
(which is a function of $\epsilon$) of $H$ with probability at least $1-\gamma$.
This is slightly different from the sample complexity estimate
arising in the context of
consistent supervised learning algorithms in statistical learning theory \cite{lorenzo}. The reasons are:
\begin{enumerate}
 \item In the case of statistical learning theory, sample complexity corresponds to
the number of \textit{i.i.d training samples} needed for the algorithm to successfully learn a target function. However,
in our case, we have a \textit{recursive} scheme whose sample complexity depends on the step-size.
%\item Moreover, the estimate is of an
%asymptotic nature, which requires this $n_0$ to be \textit{large enough} (so that the
%decreasing step size  and the tail sum of the squares of the step-sizes has decreased sufficiently).
\item Ours is a \textit{conditional} estimate, i.e.,
 the estimate is conditioned on the fact that $\theta_{n_0} \in B$ where $B$ is
an open subset of the domain of attraction of $H\subset B$ and has compact closure, and $n_0$ is sufficiently large.
\end{enumerate}
Another point worth noting is that sample complexity results are much weaker than lock-in probability and do not
require existence of Lyapunov function. In our work, we give a sample complexity estimate
for the setting where the recursion is a \textit{stochastic fixed point} iteration driven by a Markov iterate-dependent noise. This
shows a \textit{quantitative} estimate of \textit{large vs. small step size trade-off} well known in stochastic
approximation literature that is shown to be useful in choosing the \textit{optimal step-size}.
%\end{comment}

The organization of the paper is as follows: Section \ref{sec_def} formally
defines the problem and provides background and assumptions.
Section \ref{main_res} presents our \textit{lock-in probability}
results for single timescale stochastic approximation. 
%Section \ref{discuss} discusses the adaptability of our proof technique over some alternate set of assumptions.
Section \ref{a.s.conv} presents results on \textit{almost sure convergence} to a
\textit{local} attractor using our results along with the assumption of  \textit{asymptotic tightness}
of the iterates. Moreover, this section shows that stability of the iterates can be proved using our results.
Section \ref{track} gives an estimate of the lock-in probability for iterates evolving along different timescales and analyzes the tracking ability of adaptive algorithms using these results.
Section \ref{sample} describes the results on sample complexity.
%gives a sufficient condition for almost sure convergence
%in absence of asymptotic tightness.
Finally, we conclude by providing
some future research directions.
\section{The Problem and Assumptions}
\label{sec_def}

In the following we describe the preliminaries and notation that we use in our proofs.
Most of the definitions and notation are from 
\cite{benveniste, borkar_book, sameer}. The notations used 
for ordinary differential equations are similar to \cite[Appendix 11.2]{borkar_book}.
In the following we describe the lock-in probability settings based on the approach in \cite{benveniste}.
The main idea is to assume existence of a solution to the Poisson equation (Assumption (M4) from Section III B of \cite{benveniste}),
thus converting Markov iterate-dependent noise into a martingale difference sequence and with additional additive errors. We refer the readers
to \cite[Part II,Chap. 2, Theorem 6]{benv_book}, \cite[Section III~D, Appendix A]{benveniste} for details on
the existence and properties of solution of the Poisson equation for a Markov iterate-dependent process.

In this work we prove 
almost sure  convergence for recursion (\ref{main_m}) 
without assuming stability of the iterates, however, following the classic Poisson equation
model stated above where the assumptions are designed keeping in mind the stability of the iterates. 
To make up for this we need to strengthen 
some existing assumptions of \cite{benveniste} (shown next), these are standard 
assumptions satisfied in application areas such as reinforcement learning.
%Under the assumptions made in \cite{benveniste} (which include the stability of the iterates), it is well-known that the limiting
%o.d.e for (\ref{eqn1}) is
%\begin{align}
%\label{ode}
%\dot{\theta}(t)=h(\theta(t))
%\end{align}
%where $h(\theta) =  \int f(\theta,y)\Gamma_{\theta}(dy)$.

Let $G \subset \mathbb{R}^d$ be open and let $V:G\to [0,\infty)$ be such that $\langle \nabla V, h \rangle : G \to \mathbb{R}$ is non-positive.
We shall assume as in \cite{borkar_book}  that $H:=\{\theta:V(\theta) = 0\}$ is equal to the set $\{\theta: \langle \nabla V(\theta), h(\theta)\rangle=0\}$
and is a compact subset of $G$.
Thus, $V$ is a strict Lyapunov function. Then $H$ is an asymptotically stable invariant set of the
differential equation $\dot{\theta}(t) = h(\theta(t))$.
Let there be an open set $B$ with compact closure such that
$H \subset B \subset \bar{B} \subset G$. In this setting, the lock-in probability is defined to be the probability that the sequence
$\{\theta_n\}$ is convergent to $H$, conditioned on the event that $\theta_{n_0}\in B$ for some $n_0$ sufficiently large.

Recall from  Theorem 8 of
\cite[p.~37]{borkar_book} that for the case of martingale difference noise, $\mathbb P[\theta_n\rightarrow
H|\theta_{n_0}\in B] \geq 1 - O(e^{-\frac{1}{s(n_0)}}),$ where $s(n_0) :=
\sum_{m=n_0}^{\infty} a(m)^2$.
%and $B$ is a bounded open set
%contained in the domain of attraction of $H$.
In this paper we obtain these results when the noise is  Markov iterate-dependent under the following assumptions:
%than \textbf{(M1)}, \textbf{(M4)}, \textbf{(M5 b,c)} mentioned in \cite[Section ~III]{benveniste}:
\begin{enumerate}[label=\textbf{(A\arabic*)}]
 \item $\limsup_{n\to \infty}\|Y_n\|< \bar{C}$ a.s. for some $\bar{C} > 0$.
This is stronger than  $\limsup_n E[\|Y_n\|^2] < \infty$ which is implied by 
\textbf{(M2)} of \cite{benveniste}.
\begin{comment}  
\begin{rmk}
Note that \textbf{(A1)} is satisfied if all the samples of the Markov iterate-dependent 
process are generated from a compact normed space. 
However it may be satisfied if only the 
initial sample is generated from a compact normed space. 
For example, consider the following special case of 
the Markov process shown in Eqn. (3.4) of \cite{benveniste}
\begin{align} \label{eq:un}
 Y_{n+1} = A \ Y_{n} + B \ W_{n}, \ n \ge 1.
\end{align} 
with $A,B \in \mathbb{R}^{k \times k}$ such that $\lVert A \rVert < 1$. Clearly, 
for any $n \ge 1$ we have the following:
\[
 Y_{n+1} = A^n \ Y_{1} + \sum_{k=1}^n A^{n-k} B W_k
\]
So long as 1) $Y_1$, the initial state of the Markov iterate-dependent process, is guaranteed to come from a compact
subset of $\mathbb{R}^k$, say $\mathcal{K}$ and 2) For all $n$, $\|W_n\|< \bar{C_1}$ a.s.,  we get that 
$\underset{n}{\sup} \lVert Y_{n} \rVert \le \underset{x \in \mathcal{K}}{\sup} \lVert x \rVert + \frac{\|B\|\bar{C_1}}{1-\|A\|}$;
%$\overline{C}$ in $\textbf{(A1)}$ is then fixed to be $\underset{x \in  \mathcal{K}}{\sup} \lVert x \rVert$ 

One can do a similar analysis for its iterate-dependent version i.e. Eqn. (3.4) of \cite{benveniste} with the following additional 
assumptions: 
\begin{enumerate}
 \item $\sup_{\theta \in \mathbb{R}^d} \|A(\theta)\| \leq M < 1$ and
 \item $\sup_{\theta \in \mathbb{R}^d} \|B(\theta)\| \leq C_2$ with $C_2 >0$.
\end{enumerate}

Additionally, if $\|A\|\leq 1$ we can conclude the same under some strong assumptions on the noise sequence $\{W_n\}$. 
\end{rmk}
Later in Section \ref{discuss} we discuss on the relaxation of \textbf{(A1)}.
\end{comment} 
\item $\sup_{y}\|f(\theta,y)\| \leq K(1+\|\theta\|)$ for all $\theta$.
\begin{rmk}
\textbf{(A2)} is a standard assumption satisfied in reinforcement learning scenarios as 
pointed in \cite[p~6]{lock_in_original}. Clearly, this is stronger than 
the hypothesis (F) on $f$ as mentioned in \cite[p~143]{benveniste}.
\end{rmk}  
\item  The stepsizes $\{a(n)\}$ are non-increasing positive scalars satisfying
\begin{align}
\sum_n a(n) = \infty, \sum_{n}{a(n)}^2 < \infty.\nonumber
\end{align}

\item For every $\theta$, the Markov chain $\Pi_\theta$ has a unique invariant probability $\Gamma_\theta$. (\textbf{(M1)} from 
\cite{benveniste}). 
%Let $\Gamma_{\theta}$ be the unique stationary distribution of the Markov iterate-dependent process for a
%fixed $\theta$ (\cite{benveniste}).
Further, $h(\theta) = \int f(\theta,y)\Gamma_{\theta}(dy)$ is Lipschitz continuous in $\theta$ with Lipschitz constant
$0<L<\infty$.
\item $\|M_{n+1}\| \leq K'(1+\|\theta_n\|)$ a.s. $\forall n$. Note that in \cite{benveniste} 
there was no martingale noise.
\item For every $\theta$ the Poisson equation
$$(1 - \Pi_{\theta})v_\theta = f(\theta,\cdot)-\int f(\theta,y)\Gamma_\theta(dy)$$
has a solution $v_\theta$. This is \textbf{(M4)} from \cite{benveniste}. Here $\Pi_\theta \phi(x):=\int \Pi_\theta(x;dy) \phi(y)$.
\item For all $R>0$ there exist constants $C_R>0$ such
that
\begin{enumerate}
\item $\sup_{\|\theta\|\leq R} \|v_{\theta}(x)\| \leq C_R (1+\|x\|)$.
\item $\|v_{\theta}(x)-v_{\theta'}(x)\| \leq C_R \|\theta - \theta'\|(1+ \|x\|)$ for all $\|\theta\| \leq R$, $\|\theta'\| \leq R$.
\end{enumerate}
This is \textbf{(M5)b,c} from \cite{benveniste}.
\end{enumerate}

Under the above assumptions we shall show that
\[
 \mathbb P\left[\theta_n\rightarrow H|\theta_{n_0}\in B\right] \geq 1 - O\left(e^{-\frac{c}{s(n_0)}}\right)
\]
 for sufficiently large $n_0$.

We provide a more detailed discussion on assumptions \textbf{(A1)} and \textbf{(A2)} as well as 
possible relaxations of these in Section \ref{discuss}. Additionally, we give a detailed comparison of our assumptions with the assumptions of \cite{andrieu1} in Section \ref{comp}.
%Other than the assumptions made in Section III B of \cite{benveniste} on $Y_n, n\geq 0$we make the following new assumptions:
%\begin{rmk}
%When $Y_n, n\geq 0$  are  i.i.d in (\ref{eqn1}), assumption (A2) will be required
%to satisfy the assumption on martingale difference noise, namely,
%%$\|M_{n+1}\|\leq K(1+ \|\theta_n\|)$
%\textbf{(A5)} which helps to get an exponential bound. There are
%some results which derive the same bound under the much weaker assumption like
%$P(\|M_{n+1}\|> u|\mathcal{F}_n) \leq c_1 e^{-c_2 \frac{u}{1+\|\theta_n\|}}$ for sufficiently large $u$.
%However they are much harder to verify (\cite{sameer}) as exponential tail bounds
%are available in literature only as unconditional probability
%for martingales with almost surely bounded differences \cite{mcdarmiad}.
%%Another reason is that for any random variable $X$ with $|X| < M$ a.s. and sigma field $\mathcal{F}$
%%\begin{align}
%%P(|X| > a | \mathcal{F}) \leq \exp(- \frac{\frac{1}{2}t^2}{E[X^2|
%%\end{align}
%Therefore (A1) and (A3 ii) are the only two new assumptions
%we make compared to the case of i.i.d noise to recover the same bound.
%\end{rmk}
%\begin{rmk}
%In the convergence proof of \cite{benveniste}, stability of the iterates and \linebreak
%$\sup_k E[\|Y_k\|^2]<\infty$ were assumed.
%Therefore the weighted (by step-size) sum of additional errors (arising after using Poisson equation) over a
%$T$-length interval were made to converge to zero by Borel-Cantelli lemma I. Here we need \textbf{(A1)} and \textbf{(A2)}
%because stability of the iterates is not assumed.
%\end{rmk}
\section{Lock-in probability calculation for single timescale stochastic approximation}
\label{main_res}

In this subsection we give a lower bound for $\mathbb P[\theta_n\rightarrow
H|\theta_{n_0}\in B]$ in terms of $s(n_0)$ when $n_0$ is sufficiently
large based on the settings described in Section \ref{sec_def}. How large $n_0$ needs to be will be specified soon.
Before proceeding further we describe our notations
and recall some known results. For $\delta > 0$, $N_{\delta}(A)$ for a set $A$ denotes its $\delta$-neighborhood $\{y : \|y - x\| < \delta, \forall \ x \in A\}$.
Let $H^a:=N_{a}(H)$. Fix some $0 < \epsilon_1 < \epsilon$
and $\delta_B>0$ such that $N_{\delta_B}(H^{\epsilon_1})\subset H^{\epsilon} \subset B$.

Let
%\begin{align}
%T = \frac{[\max_{\theta \in \bar{B}} V(\theta)]-\epsilon_1}{\min_{\theta \in \bar{B} \setminus H^{\epsilon_1}}|\langle\nabla V(\theta),h(\theta)\rangle|}.
%\end{align} Then 
$T$ be an upper bound 
for the time required for a solution of the o.d.e.\ (\ref{eqn2}) to reach the set
$H^{\epsilon_1}$, starting from an initial condition in $\bar{B}$. 
The existence of such a $T$ independent of the starting point in $\bar{B}$ can be proved 
using the continuity of flow of the o.d.e (\ref{eqn2}) around the initial point and the fact that $H$ is an
asymptotically stable set of the same o.d.e; see Lemma 1  of \cite[Chap. 3]{borkar_book} for the proof of a similar result.
\begin{rmk}
 Due to the above argument we do not need assumption \textbf{(A4)} from \cite{gugan} and similar assumption made in \cite[p~32]{borkar_book}. They clearly gets satisfied if the vector field is linear, however for nonlinear stochastic approximation it is not clear whether they gets satisfied.  
\end{rmk}

 Let $t(n)= \sum_{m=0}^{n
- 1}a(m)$, $n \geq 1$ with $t(0)=0$. Let $n_0 \geq 0, n_m = \min \{n:t(n)\geq t(n_{m-1})+T\}$ and $T_m = t(n_m)$, $m \geq 1$.
Define $\bar \theta(t)$ by: $\bar{\theta}(t(n)) = \theta_n$, with linear
interpolation on $[t(n), t(n+1))$ for all $n$.
 Let $\theta^{t(n_m)}(\cdot)$ be the solution of the limiting o.d.e.\ (\ref{eqn2}) on $[t(n_m), t(n_{m+1}))$ with
 the initial condition $\theta^{t(n_m)}(t(n_m))=\bar \theta (t(n_m)) =  \theta_{n_m}$.
  Let
 \[
 \rho_m := \sup_{t \in [t(n_m), t(n_{m+1}))} \|\bar \theta(t) - \theta^{t(n_m)}(t)\|.
 \]

 We recall here a few key results from \cite{lock_in_original}. As shown there, 
if $\theta_{n_0}\in B$, and $\rho_m < \delta_B$ for all $m \geq 0$, then $\bar{\theta}(T_n)$ is in $H^{\epsilon} \subset B$
for all $n \geq 1$. Therefore using discrete Gronwall's inequality we can show that $\sup_{t \geq T_0} \bar{\theta}(t) < \infty$.
It is also known (\cite{benveniste}, section IIC) that if the sequence of iterates
 $\{\theta_n\}$ remains bounded almost surely on a prescribed set of
 sample points, and if on this set the iterates lie in a compact set in the domain of 
attraction of any local attractor infinitely often then the sequence of iterates converge almost surely on this
 set to that local attractor.
%(possibly sample path dependent) compact internally chain transitive invariant set of o.d.e (\ref{eqn2}). Additionally,
%it follows from the LaSalle invariance principle that any internally chain transitive invariant subsets of $\bar{B}$ will be
%subsets of $H$.
Using this fact gives  the following estimate on the
 probability of convergence, conditioned on $\theta_{n_0}\in B$ (\cite{borkar_book}, Lemma 1, p.\ 33):
 \[
 P\left[\bar \theta(t)\rightarrow H | \theta_{n_0}\in B\right] \geq
 P\left[\rho_m < \delta_B \ \forall m \geq 0 | \theta_{n_0}\in B
 \right].
 \]
Let $\mathcal{B}_m$ denote the event that $\theta_{n_0}\in B$ and
 $\rho_k < \delta_B$ for $k=0,1,\ldots,m$. Clearly, $B_m \in \mathcal{F}_{n_{m+1}}$. The following lower bound
 for the above probability has been obtained in (\cite{borkar_book}, Lemma 2, p.\ 33):
 \[
 P\left[\rho_m < \delta_B \ \forall m \geq 0 | \theta_{n_0}\in B
 \right]\geq 1-\sum_{m=0}^\infty P\left[\rho_m \geq \delta_B |\mathcal{B}_{m-1} \right].
 \]

Subsequently the idea is to find an upper bound for $\rho_m$ consisting of errors (asymptotically negligible on $\mathcal{B}_{m-1}$) 
as well as martingale terms. Then for some large $n_0$, one may bound 
$P(\rho_m \geq \delta_B|\mathcal{B}_{m-1})$ using a suitable martingale 
concentration inequality. In the following we describe how to achieve the above in our setting. 

Using the Poisson equation
one can write the recursion (\ref{main_m}) as
\small
\begin{align}
\theta_{n+1}= \theta_{n}+ a(n)h(\theta_n) + a(n)\left[v_{\theta_n}(Y_{n})-\Pi_{\theta_n}v_{\theta_n}(Y_{n})+M_{n+1}\right]\nonumber
\end{align}
\normalsize
where $\Pi_{\theta} \phi(x) = \int \phi(y)\Pi_{\theta}(x;dy)$.
Let $\zeta_{n+1} = v_{\theta_n}(Y_{n}) -\Pi_{\theta_n}v_{\theta_n}(Y_{n})$.
We decompose
\begin{align}
\begin{aligned}
\zeta_{n+1} = v_{\theta_n}(Y_{n+1}) -\Pi_{\theta_n}v_{\theta_n}(Y_{n}) + v_{\theta_n}(Y_{n}) - v_{\theta_{n+1}}(Y_{n+1})+  \\ v_{\theta_{n+1}}(Y_{n+1})- v_{\theta_{n}}(Y_{n+1})\nonumber
\end{aligned}
\end{align}
and set
\begin{gather*}
A_n = \sum_{k=0}^{n-1}a(k)\zeta^{(1)}_{k+1}, \ B_n = \sum_{k=0}^{n-1}a(k)\zeta^{(2)}_{k+1}, \ C_n = \sum_{k=0}^{n-1}a(k)\zeta^{(3)}_{k+1}, \nonumber \\
D_n = \sum_{k=0}^{n-1}a(k)M_{k+1}, n \geq 1\nonumber
\end{gather*}
where
\begin{gather*}
\zeta^{(1)}_{n+1} =  v_{\theta_n}(Y_{n+1}) -\Pi_{\theta_n}v_{\theta_n}(Y_{n}), \ \zeta^{(2)}_{n+1} = v_{\theta_n}(Y_{n}) - v_{\theta_{n+1}}(Y_{n+1}),\nonumber \\
\zeta^{(3)}_{n+1} = v_{\theta_{n+1}}(Y_{n+1})- v_{\theta_{n}}(Y_{n+1}).
\end{gather*}

Then one can easily see that as in the proof of Lemma 3 of \cite[p.~34]{borkar_book}
\small
\begin{align}
\label{rho}
\rho_m \leq & (C a(n_0) + K_T CLs(n_0)) + K_T [ \max_{n_m \leq j \leq n_{m+1}} \|A_j - A_{n_m}\| + \nonumber \\
            & \max_{n_m \leq j \leq n_{m+1}} \|B_j - B_{n_m}\|+  \max_{n_m \leq j \leq n_{m+1}} \|C_j - C_{n_m}\| + \nonumber \\ 
            & \max_{n_m \leq j \leq n_{m+1}} \|D_j - D_{n_m}\|],
\end{align}
%\end{align}
\normalsize
where $C$ is a bound on $\|h(\Phi_t(\theta)\|$, with $\Phi_t$ being the time-$t$ flow map for the o.d.e
(\ref{eqn2}), $0\leq t \leq T+1$ and $\theta \in \bar{B}$. Also, $K_T=e^{LT}$.

Choose an $n_0^{(1)}$ such that
\begin{align}
\label{1}
(C a(n_0^{(1)}) + K_T CLs(n_0^{(1)})) < \delta_B/2.
\end{align}

%Before proving some useful lemmas we recall a few important properties involving solution of Poisson
%equation $v: \mathbb{R}^d \times \mathbb{R}^k \to \mathbb{R}^d$ from \cite{benveniste} which will be used
%frequently.

%For all $R > 0$ there exist constant $C_R>0$ such that
%\begin{enumerate}[label=\textbf{(P\arabic*)}]
%\item \begin{align}
% \|v_{\theta}(x)-v_{\theta'}(x)\| \leq C_R \|\theta - \theta'\|(1+ \|x\|)\nonumber
%\end{align}
%for $\|\theta\| \leq R$, $\|\theta'\| \leq R$.
%\item $\sup_{\|\theta\|\leq R} \|v_{\theta}(x)\| \leq C_R (1+\|x\|)$.
%\end{enumerate}

%\begin{lem}
%\label{benv_lem}
%For each $R$ there exist constant $C_R'$ such that
%\begin{enumerate}
% \item $\|\Pi_\theta v_\theta(x) - \Pi_{\theta'} v_{\theta'}(x)\| \leq C_R'(1+\|x\|)(\|\theta - \theta'\|)$ for every $\theta, \theta'$
%with $\|\theta\| < R$, $\|\theta'\| < R$
%\item $\sup_{\|\theta\| \leq R}\|\Pi_\theta v_\theta(x)\|\leq C_R'(1+\|x\|)$
%\end{enumerate}

%\end{lem}
%After this we prove this useful lemma:
The following important lemma shows that $\forall m \geq 1$, on $\mathcal{B}_{m-1}$, the iterates (\ref{main_m}) are stable over $T$-length interval with the stability 
constant independent of $m$. This is enough for our proofs to go through and justifies the importance of assumptions 
\textbf{(A2)} and \textbf{(A5)}. 
\begin{lem}
\label{T_stability}
On $\mathcal{B}_{m-1}, \|\theta_j\| \leq K''$ for any $n_m \leq j \leq n_{m+1}$ where the constant $K''$ is independent of $m$.
\end{lem}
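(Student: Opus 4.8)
The plan is to anchor the iterate at the left endpoint $n_m$ of the block in a fixed compact set and then propagate a discrete Gronwall bound forward over the (uniformly bounded) time horizon $[t(n_m), t(n_{m+1}))$. First I would invoke the result recalled from \cite{lock_in_original}: on $\mathcal{B}_{m-1}$ we have $\rho_k < \delta_B$ for $k = 0,\ldots,m-1$, and by the inductive nature of that argument (each $\rho_k < \delta_B$ pushes the grid point one index further) these first $m$ conditions, together with $\theta_{n_0}\in B$, force $\bar\theta(T_j)\in H^{\epsilon}$ for $j=1,\ldots,m$. In particular $\theta_{n_m} = \bar\theta(T_m) \in H^{\epsilon}\subset B \subset \bar{B}$. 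Since $\bar{B}$ is compact, this yields $\|\theta_{n_m}\| \leq R$, where $R := \sup_{\theta\in\bar{B}}\|\theta\|$ depends only on $\bar{B}$ and hence is independent of $m$. This is the conceptual heart of the lemma: the grid point $\theta_{n_m}$ cannot drift away as $m$ grows.

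Next I would use the recursion together with the linear-growth bounds \textbf{(A2)} and \textbf{(A5)}. Taking norms in (\ref{main_m}) and using $\sup_y\|f(\theta_j,y)\|\leq K(1+\|\theta_j\|)$ and $\|M_{j+1}\|\leq K'(1+\|\theta_j\|)$ a.s., with $L' := K+K'$, gives
\begin{align}
1 + \|\theta_{j+1}\| \leq (1 + \|\theta_j\|)\bigl(1 + a(j)L'\bigr) \quad \text{a.s.} \nonumber
\end{align}
These bounds are almost sure, hence valid on $\mathcal{B}_{m-1}$. Iterating from $n_m$ up to $j$ and using $1+x\leq e^{x}$ yields
\begin{align}
1 + \|\theta_j\| \leq (1 + \|\theta_{n_m}\|)\prod_{k=n_m}^{j-1}\bigl(1 + a(k)L'\bigr) \leq (1 + \|\theta_{n_m}\|)\exp\!\bigl(L'\,(t(j) - t(n_m))\bigr) \nonumber
\end{align}
for all $n_m \leq j \leq n_{m+1}$.

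Finally I would bound the elapsed time uniformly in $m$. By the definition $n_{m+1} = \min\{n : t(n)\geq t(n_m)+T\}$ and the fact that $\{a(n)\}$ is non-increasing (\textbf{(A3)}), we have $t(n_{m+1}) - t(n_m) < T + a(n_{m+1}-1) \leq T + a(0)$, so $t(j) - t(n_m) \leq T + a(0)$ for every $j$ in the block. Combining with $\|\theta_{n_m}\|\leq R$ gives
\begin{align}
\|\theta_j\| \leq (1 + R)\,e^{L'(T + a(0))} - 1 =: K'', \nonumber
\end{align}
which depends only on $R$, $K$, $K'$, $T$ and $a(0)$, and is therefore independent of $m$.

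The computation itself is routine once the ingredients are aligned; the points requiring genuine care — and where the uniformity in $m$ actually comes from — are twofold. First, the uniform bound on the starting value $\theta_{n_m}$ must be traced back to the already-established fact that $\theta_{n_m}\in\bar{B}$ on $\mathcal{B}_{m-1}$, so that compactness of $\bar{B}$ is what removes the $m$-dependence of the anchor. Second, the o.d.e.\ time elapsed over one block must be bounded by $T + a(0)$ uniformly in $m$, which is precisely where monotonicity of the step sizes enters. These two facts let the Gronwall factor $e^{L'(T+a(0))}$ be fixed once and for all, and the remaining use of \textbf{(A2)} and \textbf{(A5)} is exactly what the surrounding discussion advertises.
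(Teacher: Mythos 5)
Your proof is correct and follows essentially the same route as the paper: anchor $\theta_{n_m}$ in the compact set $\bar{B}$ via the inductive consequence of $\mathcal{B}_{m-1}$, bound the increments by \textbf{(A2)} and \textbf{(A5)}, and close with a discrete Gronwall estimate over the uniformly bounded block length. Your version is in fact slightly more careful than the paper's on two points the paper glosses over (why $\|\theta_{n_m}\|$ is bounded uniformly in $m$, and that the block length is $T+a(0)$ rather than exactly $T$), but these are refinements of the same argument, not a different one.
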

\begin{proof}
From the definition of $\mathcal{B}_{m-1}$, we know that $\theta_{n_m} \in B$ on this event. Let $\|\theta_{n_m}\| \leq \tilde{C}$ $\forall m$.
Clearly, for $n_m \leq j \leq n_{m+1}$,
\begin{align}
 \begin{aligned}
  \|\theta_j\| &\leq \|\theta_{n_m}\| + \sum_{k=n_m}^{j-1} a(k)\left[\|f(\theta_k, Y_{k})\|+\|M_{k+1}\|\right]\nonumber \\
               &\leq \tilde{C} + \tilde{K} \sum_{k=n_m}^{j-1} a(k) (1+ \|\theta_k\|)
 \end{aligned}
\end{align}
where $\tilde{K} =\max(K,K')$.
As $\sum_{k=n_m}^{j-1} a(k) \leq T$, discrete Gronwall inequality gives the result.
\end{proof}

\begin{lem}
For sufficiently large $n_m$,  $\max_{n_m \leq j \leq n_{m+1}} \|B_j - B_{n_m}\| < \frac{\delta_B}{8K_T}$ a.s.
on the event $\mathcal{B}_{m-1}$.
\end{lem}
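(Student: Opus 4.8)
The plan is to exploit the crucial structural feature that the summand here is itself a first difference: writing $w_k := v_{\theta_k}(Y_k)$, assumption gives $\zeta^{(2)}_{k+1} = w_k - w_{k+1}$, so that $B_j - B_{n_m} = \sum_{k=n_m}^{j-1} a(k)(w_k - w_{k+1})$ is a step-size-weighted telescoping sum. Since the weights $a(k)$ spoil an outright telescoping, I would apply Abel's summation by parts, which yields
\begin{align}
B_j - B_{n_m} = a(n_m) w_{n_m} - a(j-1) w_j + \sum_{k=n_m+1}^{j-1} \left(a(k)-a(k-1)\right) w_k. \nonumber
\end{align}
This rewrites the difference as two boundary terms plus a sum against the step-size increments, each of which can be controlled once I have a uniform bound on $\|w_k\|$ over the block $n_m \leq k \leq n_{m+1}$.

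First I would establish that uniform bound. On $\mathcal{B}_{m-1}$, Lemma \ref{T_stability} furnishes $\|\theta_k\| \leq K''$ throughout the block; assumption \textbf{(A1)} provides (on a full-measure event) a finite time past which $\|Y_k\| \leq \bar{C}$; and assumption \textbf{(A7)(a)} then gives $\|w_k\| = \|v_{\theta_k}(Y_k)\| \leq C_{K''}(1+\bar{C}) =: \tilde{C}$. With this, the two boundary terms contribute at most $2\tilde{C}\,a(n_m)$, using $a(j-1) \leq a(n_m)$ from the monotonicity in \textbf{(A3)}.

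The decisive point is that the step-size increments in the middle sum telescope: since $\{a(n)\}$ is non-increasing, $|a(k)-a(k-1)| = a(k-1)-a(k)$, whence
\begin{align}
\sum_{k=n_m+1}^{j-1} |a(k)-a(k-1)| = a(n_m) - a(j-1) \leq a(n_m), \nonumber
\end{align}
so the middle term is at most $\tilde{C}\,a(n_m)$. Combining the three estimates gives $\max_{n_m \leq j \leq n_{m+1}} \|B_j - B_{n_m}\| \leq 3\tilde{C}\,a(n_m)$. Because $a(n) \to 0$ under \textbf{(A3)}, taking $n_m$ large forces the right-hand side below $\delta_B/(8K_T)$, which is the assertion.

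The main obstacle, and the reason \textbf{(A1)} must be handled with care, is that $\|Y_k\| \leq \bar{C}$ holds only beyond a random, sample-path-dependent time, since \textbf{(A1)} bounds the $\limsup$ rather than the supremum. Hence ``sufficiently large $n_m$'' should be read on the full-measure event supplied by \textbf{(A1)}, where the threshold may depend on $\omega$: the deterministic part of the threshold comes from $a(n_m)\to 0$, while the random part only serves to make $\tilde{C}$ a genuine (deterministic) constant on that event. This is exactly why \textbf{(A1)} is strengthened beyond the second-moment condition implied by \textbf{(M2)} of \cite{benveniste}, and it is harmless for the eventual a.s.\ convergence claim since every estimate is carried out on a set of probability one.
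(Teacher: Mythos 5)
Your proof is correct and follows essentially the same route as the paper's: an Abel summation by parts of $\sum_k a(k)(w_k - w_{k+1})$ into two boundary terms plus a telescoping sum of step-size increments, with the uniform bound on $\|v_{\theta_k}(Y_k)\|$ coming from Lemma \ref{T_stability}, \textbf{(A1)} and \textbf{(A7)(a)}, and the conclusion from $a(n)\to 0$. Your closing remark about the $\limsup$ in \textbf{(A1)} forcing an $\omega$-dependent threshold is a point of care the paper's proof passes over silently, but it does not change the argument.
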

\begin{proof}
Now, if we write
$B_{n_m} = a(0)v_{\theta_0}(Y_0) + \sum_{k=1}^{n_m-1}(a(k) - a(k-1))v_{\theta_k}(Y_k) - a(n_m-1) v_{\theta_{n_m}}(Y_{n_m})$
we obtain
\begin{align}
B_j - B_{n_m} = &\sum_{k=n_m}^{j-1}(a(k) - a(k-1))v_{\theta_k}(Y_k) +\nonumber \\
 &a(n_m-1) v_{\theta_{n_m}}(Y_{n_m})-a(j-1) v_{\theta_j}(Y_j).\nonumber
\end{align}
As $\|\theta_i\| \leq K'$ on $\mathcal{B}_{m-1}$ 
\begin{align}
\|B_j - B_{n_m}\| \leq C_R \sum_{k=n_m}^{j-1}(a(k-1) - a(k))(1+\|Y_k\|) +  \nonumber \\
C_R\left[ a(n_m-1)(1+ \|Y_{n_m}\|) +  a(j-1)(1+ \|Y_{j}\|)\right]\nonumber
\end{align}
using \textbf{(A7a)}.
Now using \textbf{(A1)}, \textbf{(A3)}\footnote{This is the only place where the requirement that step size is non-increasing in \textbf{(A3)} is used.}
we see that
\begin{align}
\|B_j - B_{n_m}\| \leq 2C_R''a(n_m-1),\nonumber
\end{align}
for some $C_R'' >0$.
Now choose $n_0^{(2)}$ such that
\begin{align}
\label{2}
2C_R''a(n_0^{(2)}-1) < \frac{\delta_B}{8K_T}.
\end{align}
The claim follows $\forall n_m \geq n^{(2)}_0$.
\end{proof}

\begin{lem}
For sufficiently large $n_m$,  $\max_{n_m \leq j \leq n_{m+1}} \|C_j - C_{n_m}\| < \frac{\delta_B}{8K_T}$ a.s. 
on the event $\mathcal{B}_{m-1}$.
\end{lem}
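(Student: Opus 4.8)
The plan is to bound $C_j - C_{n_m}$ by a direct estimate that exploits the fact that $\zeta^{(3)}$ differs only in the \emph{parameter} argument of $v$, so that the parameter-Lipschitz bound \textbf{(A7b)} applies cleanly and, crucially, the single-step increment of $\theta$ supplies a second factor of the step size. Writing out the difference,
\[
C_j - C_{n_m} = \sum_{k=n_m}^{j-1} a(k)\bigl(v_{\theta_{k+1}}(Y_{k+1}) - v_{\theta_k}(Y_{k+1})\bigr),
\]
I would first invoke Lemma \ref{T_stability}, which gives $\|\theta_k\| \leq K''$ for all $n_m \leq k \leq n_{m+1}$ on $\mathcal{B}_{m-1}$ with $K''$ independent of $m$. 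This licenses applying \textbf{(A7b)} with $R = K''$ to each summand, yielding $\|v_{\theta_{k+1}}(Y_{k+1}) - v_{\theta_k}(Y_{k+1})\| \leq C_{K''}\|\theta_{k+1}-\theta_k\|(1+\|Y_{k+1}\|)$.

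Next I would control the increment $\|\theta_{k+1}-\theta_k\| = a(k)\|f(\theta_k,Y_k)+M_{k+1}\|$ via \textbf{(A2)} and \textbf{(A5)}; on $\mathcal{B}_{m-1}$ this is at most $a(k)\tilde{K}(1+K'')$ for a constant $\tilde{K}$ combining $K$ and $K'$. Using \textbf{(A1)} to replace $1+\|Y_{k+1}\|$ by the constant $1+\bar{C}$ for all $k \geq n_m$ (valid a.s.\ once $n_m$ is large), and collecting constants into some $C' > 0$, I obtain the uniform bound
\[
\|C_j - C_{n_m}\| \leq C' \sum_{k=n_m}^{j-1} a(k)^2 \leq C' s(n_m)
\]
for every $j \in [n_m, n_{m+1}]$. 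Since $s(n_m) \to 0$ by \textbf{(A3)}, choosing $n_0^{(3)}$ so that $C' s(n_0^{(3)}) < \delta_B/(8K_T)$ completes the argument for all $n_m \geq n_0^{(3)}$.

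The point requiring the most care --- and the reason Lemma \ref{T_stability} was established first --- is the uniformity in $m$: without an $m$-independent radius $K''$ bounding the iterates across the whole block $[n_m, n_{m+1}]$, both the Lipschitz constant $C_{K''}$ from \textbf{(A7b)} and the increment bound would vary with $m$, and no single threshold $n_0^{(3)}$ would suffice. A second, milder subtlety is that \textbf{(A1)} only yields $\|Y_n\| < \bar{C}$ beyond a random index, so the almost-sure qualifier together with ``sufficiently large $n_m$'' is exactly what justifies replacing $\|Y_{k+1}\|$ by $\bar{C}$. The essential structural difference from the $B_n$-term is that there one exploits monotonicity of the step sizes through summation by parts to obtain an $O(a(n_m))$ bound, whereas here the extra factor $a(k)$ coming from the increment turns the sum directly into the summable tail $s(n_m)$.
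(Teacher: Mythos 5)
Your proof is correct and follows essentially the same route as the paper: apply \textbf{(A7b)} on the $m$-independent ball supplied by Lemma \ref{T_stability}, bound the one-step increment $\|\theta_{k+1}-\theta_k\|$ by $a(k)\tilde{K}(1+K'')$ via \textbf{(A2)} and \textbf{(A5)}, absorb $1+\|Y_{k+1}\|$ into a constant via \textbf{(A1)}, and sum to obtain an $O\bigl(\sum_{k\ge n_m} a(k)^2\bigr)$ bound that is killed by \textbf{(A3)}. The paper's version is terser but identical in substance; your explicit remarks on uniformity in $m$ and on the almost-sure qualifier coming from \textbf{(A1)} only make explicit what the paper leaves implicit.
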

\begin{proof}
Using \textbf{(A7b)} we see that
\begin{align}
\|\zeta^{(3)}_{k+1}\| \leq C_R\|\theta_{k+1} - \theta_k\|(1+\|Y_{k+1}\|).\nonumber
\end{align}
Again using the stability of the iterates in the $T$ length interval on $\mathcal{B}_{m-1}$ and the assumptions \textbf{(A1)} and
\textbf{(A2)}
we see that \begin{align}
\|\zeta^{(3)}_{k+1}\| \leq C_R\tilde{K}\bar{C}a(k).\nonumber
\end{align}
Therefore \begin{align}
\|C_j - C_{n_m}\| \leq C_R\tilde{K}\bar{C} \sum_{k=n_m}^{j-1} a(k)^2. \nonumber
\end{align}
Now choose $n_0^{(3)}$ such that
\begin{align}
\label{3}
C_R \tilde{K}\bar{C} \sum_{k=n_0^{(3)}}^{j-1} a(k)^2 < \frac{\delta_B}{8K_T}.
\end{align} This is possible
due to  \textbf{(A3)}. The claim follows for $n_m \geq n^{(3)}_0$.
\end{proof}
\begin{thm}
\label{main_thm}
For $n_0$ sufficiently large,
\begin{align}
P(\bar{\theta}(t) \to H| \theta_{n_0} \in B) \geq 1-2de^{-\frac{\hat{K}\delta_B^2}{ds(n_0)}}- 2d e^{-\frac{\hat{C}\delta_B^2}{ds(n_0)}}.\nonumber
\end{align}
\end{thm}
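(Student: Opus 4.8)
The plan is to push everything down onto the two genuinely stochastic pieces $\{A_n\}$ and $\{D_n\}$, since the remaining contributions to $\rho_m$ have already been forced to be deterministically small on $\mathcal{B}_{m-1}$. First I would observe that $\Pi_{\theta_n}v_{\theta_n}(Y_n)=E[v_{\theta_n}(Y_{n+1})\mid\mathcal{F}_n]$, so $\zeta^{(1)}_{n+1}$ is a martingale-difference sequence and $\{A_n\}$ is a martingale, while $\{D_n\}$ is a martingale because $\{M_n\}$ is one. Starting from the recorded bound $P[\rho_m<\delta_B\ \forall m\mid\theta_{n_0}\in B]\ge 1-\sum_{m\ge0}P[\rho_m\ge\delta_B\mid\mathcal{B}_{m-1}]$, I would take $n_0\ge\max(n_0^{(1)},n_0^{(2)},n_0^{(3)})$ so that, by (\ref{1}) and the two preceding lemmas, on $\mathcal{B}_{m-1}$ the first bracket of (\ref{rho}) is below $\delta_B/2$ and the $B$- and $C$-terms are each below $\delta_B/8$. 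Then on $\mathcal{B}_{m-1}$ the event $\{\rho_m\ge\delta_B\}$ forces one of the two martingale terms to be large, giving
\[
P[\rho_m\ge\delta_B\mid\mathcal{B}_{m-1}]\le P\Big[\max_j\|A_j-A_{n_m}\|\ge\tfrac{\delta_B}{8K_T}\,\Big|\,\mathcal{B}_{m-1}\Big]+P\Big[\max_j\|D_j-D_{n_m}\|\ge\tfrac{\delta_B}{8K_T}\,\Big|\,\mathcal{B}_{m-1}\Big].
\]

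Next I would bound the increments. By Lemma \ref{T_stability}, on $\mathcal{B}_{m-1}$ we have $\|\theta_k\|\le K''$ for $n_m\le k\le n_{m+1}$; then \textbf{(A7a)} with \textbf{(A1)} yields $\|\zeta^{(1)}_{k+1}\|\le 2C_{K''}(1+\bar C)=:L_1$ and \textbf{(A5)} yields $\|M_{k+1}\|\le K'(1+K'')=:L_2$, so the increments of $A$ and $D$ are bounded by $L_1a(k)$ and $L_2a(k)$ on $\mathcal{B}_{m-1}$. Since $\mathcal{B}_{m-1}\in\mathcal{F}_{n_m}$, I would apply the Azuma--Hoeffding inequality together with Doob's maximal inequality, coordinate by coordinate and conditionally on $\mathcal{F}_{n_m}$. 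Writing $b_m:=\sum_{k=n_m}^{n_{m+1}-1}a(k)^2$ for the quadratic variation over the $m$-th interval and splitting $\{\|\cdot\|\ge r\}$ into the $d$ events $\{|\cdot|\ge r/\sqrt d\}$, this gives
\[
P\Big[\max_j\|A_j-A_{n_m}\|\ge\tfrac{\delta_B}{8K_T}\,\Big|\,\mathcal{B}_{m-1}\Big]\le 2d\,\exp\!\Big(-\tfrac{\delta_B^2}{128K_T^2L_1^2\,d\,b_m}\Big),
\]
and the same estimate with $L_1$ replaced by $L_2$ for the $D$-term.

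Finally I would sum over $m$. As the intervals $[n_m,n_{m+1})$ partition $[n_0,\infty)$ we have $\sum_{m\ge0}b_m=s(n_0)$, and for $n_0$ large enough that $s(n_0)$ is smaller than the fixed constants in the exponents the elementary inequality $e^{-C/x}\le\frac{x}{s(n_0)}e^{-C/s(n_0)}$ holds for all $0<x\le s(n_0)$; applying it with $x=b_m$ collapses the series, $\sum_{m\ge0}e^{-C/b_m}\le\frac{e^{-C/s(n_0)}}{s(n_0)}\sum_{m\ge0}b_m=e^{-C/s(n_0)}$. Setting $\hat K=(128K_T^2L_1^2)^{-1}$ and $\hat C=(128K_T^2L_2^2)^{-1}$ then yields $\sum_mP[\rho_m\ge\delta_B\mid\mathcal{B}_{m-1}]\le 2d\,e^{-\hat K\delta_B^2/(d\,s(n_0))}+2d\,e^{-\hat C\delta_B^2/(d\,s(n_0))}$, which is exactly the claimed bound. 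The step I expect to be the main obstacle is the conditional concentration itself: the increment bounds (and the $\|Y_k\|<\bar C$ bound from \textbf{(A1)}, which holds only eventually and almost surely) are valid only on $\mathcal{B}_{m-1}$, so the martingale has to be localized or stopped so that it has globally bounded increments yet coincides with the true process on $\mathcal{B}_{m-1}$ before Azuma's inequality can be invoked conditionally on $\mathcal{F}_{n_m}$; by contrast the summation identity is routine once the per-interval estimate is put in the $b_m$ form above.
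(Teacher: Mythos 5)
Your proposal is correct and follows essentially the same route as the paper: reduce $\{\rho_m\ge\delta_B\}$ to the two martingale terms $A$ and $D$ using the deterministic smallness of the drift, $B$- and $C$-contributions for $n_0\ge\max(n_0^{(1)},n_0^{(2)},n_0^{(3)})$, apply the conditional maximal Azuma inequality coordinate-wise on $\mathcal{B}_{m-1}$ via the $T$-interval stability lemma, and sum over $m$. The only differences are cosmetic: you make explicit the summation step (the inequality $e^{-C/b_m}\le \frac{b_m}{s(n_0)}e^{-C/s(n_0)}$ when $s(n_0)\le C$) that the paper delegates to Theorem 11 of Borkar's book, and the localization issue you flag is precisely what the paper's conditional version of Azuma's inequality (proved in the Appendix, using $\mathcal{B}_{m-1}\in\mathcal{F}_{n_m}$) is designed to handle.
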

\begin{proof}
Set
\begin{align}
\label{large}
n_0 = \max (n_0^{(1)}, n_0^{(2)}, n_0^{(3)}).
\end{align}.
From (\ref{rho}) we see that
for this (large) $n_0$
%\begin{align}
\small
\begin{align}
P(\rho_m \geq \delta_B |& \mathcal{B}_{m-1}) \leq P(\max_{n_m \leq j \leq n_{m+1}} \|A_j - A_{n_m}\|>\frac{\delta_B}{8K_T}|\mathcal{B}_{m-1})  \nonumber \\
                        & + P(\max_{n_m \leq j \leq n_{m+1}} \|D_j - D_{n_m}\|>\frac{\delta_B}{8K_T}|\mathcal{B}_{m-1}). \nonumber
\end{align}
\normalsize
%\end{align}
Again using the stability of the iterates in the $T$ length interval on $\mathcal{B}_{m-1}$ and assumption
\textbf{(A7a)} we see that $\zeta^{(1)}_{k+1}$ is bounded a.s. on $\mathcal{B}_{m-1}$ by
the constant $C_0=2C_R(1+\bar{C})$ for $n_m \leq k \leq j-1$. Therefore each of the components in
this vector is also bounded by the same constant.
Therefore,
\small
\begin{align}
\nonumber
P(&\max_{n_m \leq j \leq n_{m+1}}  \|A_j - A_{n_m}\|>\delta_B/8K_T|\mathcal{B}_{m-1}) \nonumber \\
&\leq P(\max_{n_m \leq j \leq n_{m+1}} \|A_j-A_{n_m}\|_{\infty} > \frac{\delta_B}{8K_T\sqrt{d}}|\mathcal{B}_{m-1})\nonumber \\
&=P(\max_{n_m \leq j \leq n_{m+1}} \max_{1\leq i \leq d} |A^i_j - A^i_{n_m}| > \frac{\delta_B}{8K_T\sqrt{d}} |\mathcal{B}_{m-1})\nonumber \\
&=P(\max_{1\leq i \leq d} \max_{n_m \leq j \leq n_{m+1}}  |A^i_j - A^i_{n_m}| > \frac{\delta_B}{8K_T\sqrt{d}} |\mathcal{B}_{m-1})\nonumber \\
&\leq \sum_{i=1}^{d}P(\max_{n_m \leq j \leq n_{m+1}}  |A^i_j - A^i_{n_m}| > \frac{\delta_B}{8K_T\sqrt{d}} |\mathcal{B}_{m-1})\nonumber \\
&\leq \sum_{i=1}^{d}2\exp\{-\frac{\delta_B^2}{32K_T^2dC_0^2(\sum_{j=n_m}^{n_{m+1}}a(j)^2)}\}\nonumber \\
&\leq 2d \exp\{-\frac{\delta_B^2}{32K_T^2dC_0^2(\sum_{j=n_m}^{n_{m+1}}a(j)^2)}\}\nonumber \\
&= 2d \exp\{-\frac{\delta_B^2}{32K_T^2dC_0^2[s(n_m)-s(n_{m+1})]}\}\nonumber
\end{align}
\normalsize
In the third inequality above we use the conditional version of the martingale concentration inequality \cite[p.~39, chap.~4]{borkar_book}.
We give an outline of its proof in the Appendix.
Now it can be shown as in Theorem 11 of \cite[Chap. 4]{borkar_book} that for sufficiently large $n_0$,
\begin{align}
P(\rho_m < \delta_B,~\forall m\geq 0| \theta_{n_0} \in B) \geq 1-2de^{-\frac{\hat{K}\delta_B^2}{ds(n_0)}} - 2d e^{-\frac{\hat{C}\delta_B^2}{ds(n_0)}}\nonumber
\end{align}
where $\hat{K} = 1/32K_T^2C_0^2$ and $\hat{C}$ is the same as in Theorem 11 \cite[p~40]{borkar_book}.
\end{proof}
%\begin{cor}
%Under the existence of the pseudo-Lyapunov function as mentioned in \cite[Chap. 4.2]{borkar_book},
%given a desired accuracy $\epsilon >0$ and confidence $\gamma$, one needs
%\begin{align}
%N_0:=\min \left{ n : \sum_{i=n_0 +1}^n} a(i) \geq \frac{(V(\theta_{n_0})-\epsilon)(T+1)}{\Delta/2}
%\end{align}
%more iterates to get within $\delta_{\epsilon} + \epsilon +  \Delta/2$ where $n_0$ should satisfy (\ref{n_0}) and
\subsection{Discussion on the assumptions}
\label{discuss}
\subsubsection{$Y_n$ unbounded}
Even if $Y_n, n\geq 0$ are unbounded and iterate-dependent our analysis will go through in the following case by creating 
functional dependency between $\{Y_n\}$ and $\{\theta_n\}$.
\begin{enumerate}[label=\textbf{(A\arabic*)'}]
\item For large $n$, $\ \lVert Y_{n+1} \rVert \le K_0 (1 + \lVert \theta_n \rVert) \text{ for some $0 < K_0 < \infty$}$.
\end{enumerate}
Such an assumption will be satisfied if $Y_{n+1} = \psi(\theta_n, Y_n)$  with $\psi$ roughly growing 
linearly as a function of $\theta$ alone \textit{i.e,}
$\lVert \psi(\theta, y) \rVert \le K_0 (1 + \lVert \theta \rVert)$. In other words, $\psi$ is point-wise bounded with respect to
$\theta$ alone. 

%Given that $\forall n$ $\theta_n \in \mathbb{R}^d$, $Y_{n} \in \mathbb{R}^k$ and $f: \mathbb{R}^{d+k} \to \mathbb{R}^d$,
%the point-wise boundedness of $f$ implies $\lVert f(z) \rVert \le K(1 + \lVert z \rVert)$
%for all $z \in \mathbb{R}^{d+k}$. In other words, for any $\theta \in \mathbb{R}^d$ 
%and $y \in \mathbb{R}^k$ we have,
%\begin{align} \label{eq:pwb}
% \lVert f(\theta , y) \rVert \le K \left(1 + \lVert (\theta , y) \rVert \right) \le 
%K(1 + \lVert \theta \rVert + \lVert y \rVert ).
%\end{align}
%If the Markov iterate-dependent process evolves satisfying \textbf{(A1)} then 
%we get the following inequality at stage $n$:
%\begin{align} \label{eq:pwbn}
% \lVert f(\theta_n , Y_{n+1}) \rVert \le K(1 + \lVert \theta_n \rVert + \lVert Y_{n+1} \rVert )
% \le K(1 + \lVert \theta_n \rVert + \overline{C}).
%\end{align}
%Without loss of generality $K$ is such that 
%$\lVert f(\theta_n , Y_{n+1}) \rVert \le K(1 + \lVert \theta_n \rVert)$ $\forall n$.
%This is essentially what is used in all our proofs. 

Accordingly we may replace \textbf{(A2)} by
the point-wise boundedness of $f$ \textit{i.e.,}
\begin{enumerate}[label=\textbf{(A\arabic*)'}]
\setcounter{enumi}{1}
\item $\ \|f(\theta, y)\| \le K(1 + \lVert \theta \rVert + \lVert y \rVert)$.
\end{enumerate}
%On the other hand if the evolution of the Markov iterate-dependent process is such that $\textbf{(A1)}'$ is satisfied, 
%, then the last term of (\ref{eq:pwbn}) can be replaced by
%$K(1 + \lVert \theta_n \rVert + K_0 (1 + \lVert \theta_n \rVert))$. Again, without loss of
%generality we have,
%$\lVert f(\theta_n , Y_{n+1}) \rVert \le K(1 + \lVert \theta_n \rVert)$ $\forall n$.
%Even in this case we may replace \textbf{(A2)} by \textbf{(A2)'}.

%Consider the ``'' version of (\ref{eq:un}) given by
%\begin{align}
% Y_{n+1} = A(\theta_n) \ Y_{n}, \ n \ge 1.
%\end{align}
%Assume 
%\begin{enumerate}
% \item $Y_n$ belongs to a compact set $K$ i.o.
% \item $\forall~R>0$, $\sup_{\|\theta\| < R} \|A(\theta)\|<\alpha_R<1$
%\end{enumerate}
%In the above case neither \textbf{(A1)'} nor $Y_n$ bounded is satisfied, still our proofs will go through.
\subsubsection{$Y_n$ pointwise bounded}
Our analysis will also go through (with the addition of an error term) 
for the following relaxation of \textbf{(A1)}:
\begin{enumerate}[label=\textbf{(A\arabic*)''}]
\item $\limsup_n \|Y_n\| < \infty \mbox{~~a.s.}.$ 
\end{enumerate}
In this case the lock-in probability statement in Theorem \ref{main_thm} will be as follows:
For $\nu > 0$, $n_0(\nu)$ sufficiently large,
\begin{align}
P(\bar{\theta}(t) \to H| \theta_{n_0} \in B) \geq 1-2de^{-\frac{\hat{K}(\nu)\delta_B^2}{ds(n_0)}}- 2d e^{-\frac{\hat{C}(\nu)\delta_B^2}{ds(n_0)}} - 2\nu.\nonumber
\end{align}The proof will work by selecting a large compact set $C(\nu)$ s.t. $P(\limsup_n \|Y_n\| < C(\nu)) > 1- \nu$ and doing the 
same calculation as in Section \ref{main_res} on this set with probability at least $1-\nu$. 
%Therefore for special kind of Markov iterate-dependent
%process if some moment condition implies $E[\sup_n\|Y_n\|]<\infty$, 
%the above argument will go through.  

\section{Proof of almost sure convergence}
\label{a.s.conv}
\subsection{Almost sure convergence under asymptotic tightness}
\begin{defn}
A sequence of random variables $\{\theta_n\}$ is called asymptotically tight if for each $\epsilon>0$
there exists a compact set $K_{\epsilon}$ such that
\begin{align}
\label{tight}
\limsup_{n \to \infty} P(\theta_n \in K_{\epsilon}) \geq 1-\epsilon.
\end{align}
\end{defn}
Clearly, (\ref{tight}) is a much weaker condition than (\ref{stability}).
In the following, we give a sufficient condition to guarantee the above:
\begin{lem}
If there is a $\phi\geq 0$ so that
$\phi(\theta) \to \infty$ as $\|\theta\| \to \infty$
and
%there exists a $C_0 >0$ such that
\begin{align}
\label{tight_condn}
\liminf_{n \to \infty} E[\phi(\theta_n)] < \infty,
\end{align}
then $\{\theta_n\}$ is asymptotically tight.
\end{lem}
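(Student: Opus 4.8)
The plan is to combine a Markov-inequality bound with the coercivity of $\phi$. First I would set $M := \liminf_{n\to\infty} E[\phi(\theta_n)]$, which is finite by hypothesis. By the definition of the $\liminf$ there is a subsequence $\{n_k\}$ along which $E[\phi(\theta_{n_k})] \to M$; in particular $\sup_k E[\phi(\theta_{n_k})] \leq \bar{M}$ for some finite $\bar{M}$ (take $\bar{M} = M+1$, discarding the finitely many initial $k$). This is the only place where the $\liminf$ (as opposed to $\limsup$ or $\lim$) matters: a finite $\liminf$ already produces a subsequence along which the expectations $E[\phi(\theta_{n_k})]$ stay bounded, and since asymptotic tightness is itself a $\limsup$ statement, controlling this one subsequence will suffice.

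Next, fix $\epsilon > 0$ and apply Markov's inequality to the nonnegative random variables $\phi(\theta_{n_k})$: for any threshold $L > 0$,
\[
P\bigl(\phi(\theta_{n_k}) > L\bigr) \leq \frac{E[\phi(\theta_{n_k})]}{L} \leq \frac{\bar{M}}{L}.
\]
Choosing $L_\epsilon := \bar{M}/\epsilon$ gives $P(\phi(\theta_{n_k}) \leq L_\epsilon) \geq 1 - \epsilon$ uniformly in $k$. It remains to turn the sublevel set $\{\theta : \phi(\theta) \leq L_\epsilon\}$ into a genuine compact set, and here I would invoke coercivity: since $\phi(\theta) \to \infty$ as $\|\theta\| \to \infty$, there is a radius $R_\epsilon$ with $\phi(\theta) > L_\epsilon$ whenever $\|\theta\| > R_\epsilon$, so $\{\phi \leq L_\epsilon\} \subseteq \{\|\theta\| \leq R_\epsilon\} =: K_\epsilon$. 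Taking $K_\epsilon$ to be this closed ball sidesteps any continuity assumption on $\phi$ (a sublevel set of a merely measurable $\phi$ need not be closed), and $K_\epsilon$ is compact by Heine--Borel.

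Finally I would assemble the pieces: $P(\theta_{n_k} \in K_\epsilon) \geq P(\phi(\theta_{n_k}) \leq L_\epsilon) \geq 1 - \epsilon$ for every $k$, whence
\[
\limsup_{n\to\infty} P(\theta_n \in K_\epsilon) \geq \limsup_{k\to\infty} P(\theta_{n_k} \in K_\epsilon) \geq 1 - \epsilon,
\]
which is exactly the defining inequality of asymptotic tightness. I do not expect a serious obstacle here, as this is a routine coercivity-plus-Markov estimate; the only points requiring a little care are extracting the bounded subsequence from the finite $\liminf$ and using a closed ball rather than the raw sublevel set so that compactness holds without assuming $\phi$ continuous.
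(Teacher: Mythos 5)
Your proof is correct and follows essentially the same route as the paper, which simply defers to the Markov-inequality-plus-coercivity argument behind Durrett's Theorem 3.2.8 (stated there by contradiction, here directly); you supply the details the paper omits. In particular, your extraction of a subsequence with uniformly bounded $E[\phi(\theta_{n_k})]$ is exactly the adaptation needed to pass from the $\sup$-hypothesis of full tightness to the $\liminf$-hypothesis of asymptotic tightness, and your use of a closed ball containing the sublevel set correctly avoids any continuity assumption on $\phi$.
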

\begin{proof}
Proof is by contradiction and is similar to the proof of sufficient condition for full tightness as given in Theorem 3.2.8 of \cite[p.~104]{durrett}.
\end{proof}
Next, we show that if the stochastic approximation iterates are asymptotically
tight then we can prove almost sure convergence to $H$ under some reasonable assumptions.
\begin{thm}
\label{a.s.}
Under \textbf{(A1)-(A7)}, if $\{\theta_n\}$ is asymptotically tight and $\liminf_n P(\theta_n \in G) =1$ then $P(\theta_n \to H) =1$.
\end{thm}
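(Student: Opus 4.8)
The plan is to reduce the almost-sure statement to the conditional lock-in estimate of Theorem \ref{main_thm} by an $\eta$-argument: it suffices to show that $P(\theta_n \to H) \geq 1 - \eta$ for every $\eta > 0$, since letting $\eta \downarrow 0$ then gives $P(\theta_n \to H) = 1$. Throughout I would use that $\bar\theta(t) \to H$ is equivalent to $\theta_n \to H$ by the interpolated construction, and that the right-hand side of Theorem \ref{main_thm} tends to $1$ as $n_0 \to \infty$, because $s(n_0) = \sum_{m \geq n_0} a(m)^2 \downarrow 0$ under \textbf{(A3)}.

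First I would produce, for a given $\eta > 0$, a \emph{compact} set $K \subset G$ and a subsequence $n_k \uparrow \infty$ along which $P(\theta_{n_k} \in K) \geq 1 - \eta/2$. By asymptotic tightness there is a compact $K_\eta$ with $\limsup_n P(\theta_n \in K_\eta) \geq 1 - \eta/4$, so some subsequence satisfies $P(\theta_{n_k} \in K_\eta) \geq 1 - \eta/4$; since $\liminf_n P(\theta_n \in G) = 1$ forces $P(\theta_{n_k} \in G) \geq 1 - \eta/4$ for large $k$, I get $P(\theta_{n_k} \in K_\eta \cap G) \geq 1 - \eta/2$. The delicate point is that $K_\eta \cap G$ is bounded but need not be a compact \emph{subset} of $G$, as its closure may meet $\partial G$. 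I would resolve this through a compact exhaustion $G_1 \subset G_2 \subset \cdots \uparrow G$ of the open set $G$, replacing $K_\eta \cap G$ by $K_\eta \cap G_j$ for $j$ large; the mass lost, $P(\theta_{n_k} \in K_\eta \cap (G \setminus G_j))$, lives in a shrinking shell around $\partial G$, and one must argue that this near-boundary mass is negligible along the subsequence. This boundary-extraction step is where the hypotheses must be combined most carefully — intuitively the drift of the limiting o.d.e.\ points away from $\partial G$ into the interior of $G$, so the iterates cannot persistently hug the boundary — and I expect it to be the main obstacle, since tightness together with $\liminf_n P(\theta_n\in G)=1$ does not by itself give tightness \emph{relative to} $G$.

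Once such a compact $K \subset G$ is in hand, I would enclose the compact set $H \cup K$ in an open set $B$ with compact closure satisfying $H \cup K \subset B \subset \bar B \subset G$; such a $B$ exists because $\mathbb{R}^d$ is locally compact and $H \cup K$ is a compact subset of the open set $G$. This $B$ plays exactly the role of the lock-in neighbourhood of Section \ref{main_res}, and in particular $P(\theta_{n_k} \in B) \geq P(\theta_{n_k} \in K) \geq 1 - \eta/2$.

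Finally I would invoke Theorem \ref{main_thm} for this fixed $B$: choosing $k$ large enough that $n_k$ exceeds the threshold $n_0$ of that theorem and that $2d e^{-\hat K \delta_B^2/(d s(n_k))} + 2d e^{-\hat C \delta_B^2/(d s(n_k))} < \eta/2$, I obtain $P(\theta_n \to H \mid \theta_{n_k} \in B) \geq 1 - \eta/2$, whence
\begin{align}
P(\theta_n \to H) \;\geq\; P(\theta_n \to H \mid \theta_{n_k} \in B)\, P(\theta_{n_k} \in B) \;\geq\; (1 - \eta/2)(1 - \eta/2) \;\geq\; 1 - \eta. \nonumber
\end{align}
Since $\eta > 0$ is arbitrary, $P(\theta_n \to H) = 1$. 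The constants $\hat K, \hat C, \delta_B$ depend on $B$ and hence on $\eta$, but this is harmless: for each fixed $\eta$ the corresponding lock-in bound still tends to $1$ as $n_k \to \infty$, which is all the argument needs.
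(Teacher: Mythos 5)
Your proposal is correct and follows essentially the same route as the paper: extract a subsequence along which $P(\theta_{n_k}\in K_\eta\cap G)\geq 1-\eta/2$ from asymptotic tightness together with the $\liminf$ hypothesis, enclose this set in an open $B$ with compact closure $\bar B\subset G$, and apply Theorem \ref{main_thm} as $n_k\to\infty$ before letting $\eta\downarrow 0$. The boundary-extraction step you flag as the main obstacle is precisely the point the paper passes over silently --- it simply chooses $B$ with $H,\,K_\epsilon\cap G\subset B\subset\bar B\subset G$, which implicitly assumes the near-$\partial G$ mass is not an issue --- so your version is, if anything, more candid about where the argument is thin.
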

\begin{proof}
Choose an open $B$ with compact closure such that $H, K_{\epsilon} \cap G \subset B \subset \bar{B}\subset G$.
Therefore
\small
\begin{align}
\nonumber
&\limsup_{n_0 \to \infty}P(\theta_{n_0} \in B)\nonumber \\
                  &\geq \limsup_{n_0\to \infty}P(\theta_{n_0} \in G \cap K_{\epsilon})\nonumber \\
                  &= \limsup_{n_0\to \infty}\left[P(\theta_{n_0} \in K_{\epsilon}) + P(\theta_{n_0} \in G) - P(\theta_{n_0} \in G\cup K_{\epsilon})\right]\nonumber \\
                  &\geq \limsup_{n_0 \to \infty} P(\theta_{n_0} \in K_{\epsilon}) + \liminf_{n_0 \to \infty}P(\theta_{n_0} \in G) - \nonumber \\
                  &\limsup_{n_0 \to \infty} P(\theta_{n_0} \in G\cup K_{\epsilon})\nonumber \\
                  &\geq 1 - \epsilon +1 - 1\nonumber.
\end{align}
\normalsize
Thus there exists a subsequence $n_0(k)$ s.t. $P(\theta_{n_0(k)} \in B)>0$. Now, 
\small
\begin{align}
\nonumber
&\lim_{k \to \infty} P(\theta_{n_0(k)} \in B, \theta_n \to H) \nonumber \\
&= \lim_{k \to \infty} P(\theta_{n_0(k)} \in B) P(\theta_n \to H | \theta_{n_0(k)} \in B) \nonumber \\
&= \lim_{k \to \infty} P(\theta_{n_0(k)} \in B) \mbox{~~using Theorem \ref{main_thm}}\nonumber
\end{align}
\normalsize
Therefore, 
\begin{align}
\nonumber
P(\theta_n \to H) &\geq \limsup_{n_0 \to \infty} P(\theta_n \to H, \theta_{n_0} \in B) \nonumber \\
                  &= \limsup_{n_0 \to \infty} P(\theta_{n_0} \in B) \nonumber \\
                  &\geq 1-\epsilon  
\end{align}
\normalsize 
Now let $\epsilon \to 0$.
\end{proof}
%\begin{cor}
%$\theta_n \to H$ a.s
%\end{cor}
%\begin{proof}
%The proof is based on boosting technique. Repeat the same experiment with independent seed then as a corollary to Second
%Borel-Cantelli lemma we get almsot sure convergence.
%\end{proof}
\begin{rmk}
We compare Theorem \ref{a.s.} to the main convergence result (Kushner-Clark Lemma) from \cite[Section II~C]{benveniste}
where stability of the iterates was assumed. In that case much weaker condition, namely $\theta_n \in A$ infinitely often
where $A$ is some compact subset of $G$ was sufficient to draw
the conclusion. Here we need a much stronger condition such as $\liminf_{n_0}P(\theta_{n_0} \in G)=1$.
\end{rmk}
\begin{rmk}
Theorem  \ref{a.s.} is valid for any `local' attractor $H$ whereas in \cite[Theorem 1]{sameer} $H$ was a `global' attractor.
\end{rmk}

There are sufficient conditions to guarantee tightness (\cite[Chap 6, Theorem 7.4]{kushner}) of the iterates in the literature. 
In the following we describe another set of sufficient conditions which guarantee (\ref{tight_condn}):
\begin{lem}
Suppose there exists a $\phi \geq 0$ with $\phi(\theta) \to \infty$ as $\|\theta\| \to \infty$ and the following properties:
Outside the unit ball
\begin{enumerate}[label=\textbf{(S\arabic*)}]
 \item $\phi$ is twice differentiable and all second order derivatives are bounded
by some constant $c$.
 \item for every $\theta$, $K \subset \mathbb{R}^k$ compact, $\langle \nabla \phi(\theta), f(\theta,y)\rangle \leq 0$ for all $y\in K$.
\end{enumerate}
 Then for the step size sequences of the form $a(n)=\frac{1}{n(\log n)^p}$ with $0< p \leq 1$, $n \geq 2$, we have (\ref{tight_condn}).
\end{lem}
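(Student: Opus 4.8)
The plan is to use $\phi$ as a stochastic Lyapunov function and to track $u_n := E[\phi(\theta_n)]$ through the recursion (\ref{main_m}). First I would Taylor-expand $\phi$ to second order around $\theta_n$: writing $\Delta_n := \theta_{n+1}-\theta_n = a(n)[f(\theta_n,Y_n)+M_{n+1}]$,
\[
\phi(\theta_{n+1}) = \phi(\theta_n) + \langle \nabla\phi(\theta_n),\Delta_n\rangle + \tfrac12\,\Delta_n^{\top}\nabla^2\phi(\xi_n)\Delta_n
\]
for some $\xi_n$ on the segment $[\theta_n,\theta_{n+1}]$. Since $Y_n$ is $\mathcal F_n$-measurable, $f(\theta_n,Y_n)$ is $\mathcal F_n$-measurable and $E[M_{n+1}\mid\mathcal F_n]=0$, so taking $E[\cdot\mid\mathcal F_n]$ kills the martingale part of the first-order term and leaves the drift $a(n)\langle\nabla\phi(\theta_n),f(\theta_n,Y_n)\rangle$. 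By \textbf{(A1)} the samples $Y_n$ eventually lie in a fixed compact set, so \textbf{(S2)} makes this drift nonpositive whenever $\|\theta_n\|>1$. Bounding the Hessian term through \textbf{(S1)} by $c\,d\,\|\Delta_n\|^2$ and using \textbf{(A2)}, \textbf{(A5)} to bound $E[\|f+M\|^2\mid\mathcal F_n]\le C(1+\|\theta_n\|)^2$, I obtain the almost-supermartingale inequality
\[
E[\phi(\theta_{n+1})\mid\mathcal F_n]\le \phi(\theta_n) + C' a(n)^2(1+\|\theta_n\|)^2 \qquad\text{on }\{\|\theta_n\|>1\}
\]
for all large $n$. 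Note that \textbf{(S2)} is essential here: without the nonpositive drift the first-order term would be of order $a(n)(1+\|\theta_n\|)$, and $\sum_n a(n)=\infty$ would let it blow up.

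The crux is controlling the quadratic-in-iterate error, since $\phi$ is \emph{not} assumed to grow quadratically and \textbf{(S2)} gives no strict descent to absorb it. The idea is not to compare $(1+\|\theta_n\|)^2$ with $\phi$ but to bound it deterministically and then exploit that $a(n)^2$ is very small. From \textbf{(A2)} and \textbf{(A5)} one has $1+\|\theta_{n+1}\|\le(1+\kappa a(n))(1+\|\theta_n\|)$ with $\kappa=K+K'$, so discrete Gronwall yields the a.s.\ bound $1+\|\theta_n\|\le(1+\|\theta_0\|)e^{\kappa t(n)}$, where $t(n)=\sum_{m=0}^{n-1}a(m)$. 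Hence the error term is dominated by $\bar\epsilon_n:=C''a(n)^2 e^{2\kappa t(n)}$, and the whole argument reduces to $\sum_n\bar\epsilon_n<\infty$. This is exactly where the family $a(n)=1/(n(\log n)^p)$, $0<p\le1$, is used: for $p<1$ one has $t(n)\sim (\log n)^{1-p}/(1-p)=o(\log n)$, so $e^{2\kappa t(n)}=n^{o(1)}$ and $\bar\epsilon_n\lesssim n^{o(1)}/n^2$; for $p=1$ one has $t(n)\sim\log\log n$, so $e^{2\kappa t(n)}\sim(\log n)^{2\kappa}$ and $\bar\epsilon_n\lesssim(\log n)^{2\kappa-2}/n^2$. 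In both cases the series converges for \emph{every} Lipschitz/growth constant. This also explains the restriction $p>0$: with $a(n)=1/n$ one would get $t(n)\sim\log n$, $e^{2\kappa t(n)}\sim n^{2\kappa}$, and summability would force $\kappa<1/2$, a spurious restriction that $p>0$ removes.

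Finally I would convert the inequality into the claimed bound on $\liminf_n u_n$. Off the unit ball the inequality above holds; on $\{\|\theta_n\|\le1\}$ the increment $\Delta_n=O(a(n))$ is tiny, so $\phi(\theta_{n+1})\le\Phi:=\sup_{\|\theta\|\le2}\phi$ for large $n$, a fixed finite bound. Combining, $E[\phi(\theta_{n+1})\mid\mathcal F_n]\le\phi(\theta_n)+\bar\epsilon_n+\Phi\,\mathbf 1_{\{\|\theta_n\|\le1\}}$, and taking expectations $u_{n+1}\le u_n+\bar\epsilon_n+\Phi\,P(\|\theta_n\|\le1)$. If $\|\theta_n\|$ stays outside the unit ball from some point on, the telescoped $\sum_n\bar\epsilon_n<\infty$ bounds $u_n$ uniformly; if instead $\theta_n$ re-enters the unit ball infinitely often, then along the return times $\phi(\theta_n)\le\sup_{\|\theta\|\le1}\phi$, which keeps $\liminf_n u_n$ finite. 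I expect the main obstacle to be precisely this last step: making the dichotomy between ``eventually outside the ball'' and ``infinitely many returns'' rigorous at the level of expectations rather than almost surely, complicated by the fact that \textbf{(A1)} supplies only a \emph{random} time after which the noise is compactly supported. I would handle the latter by localizing on the event $\{\|Y_k\|\le\bar C\ \forall k\ge n_*\}$, whose probability tends to $1$, at the cost of an additive $\nu$ exactly as in Section \ref{discuss}, which is harmless for an asymptotic-tightness conclusion.
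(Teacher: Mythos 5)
Your proposal is correct and follows essentially the same route as the paper: the conditional inequality $E[\phi(\theta_{n+1})\mid\mathcal F_n]\le\phi(\theta_n)+O(a(n)^2(1+\|\theta_n\|)^2)$ obtained from the second-order Taylor expansion together with \textbf{(S1)}--\textbf{(S2)}, the deterministic Gronwall bound $1+\|\theta_n\|\lesssim e^{\tilde K t(n)}$, and the observation that $t(n)=o(\log n)$ (resp.\ $t(n)\sim\log\log n$) makes $\sum_n a(n)^2e^{2\tilde K t(n)}$ converge for $0<p<1$ (resp.\ $p=1$) are exactly the paper's steps leading to (\ref{series}). The only difference is that you explicitly worry about the behaviour inside the unit ball and the random time implicit in \textbf{(A1)}, details the paper absorbs into its citation of Theorem 3 of \cite{sameer} before telescoping directly to (\ref{tight_condn}).
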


%\begin{rmk}
%This tightness condition will be much more useful when $\{Y_n\}$ are i.i.d. iterate-dependent 
%which can be absorbed together with the martingale noise.
%In that case \textbf{(S2)} can be replaced 
%by 
%\begin{enumerate}[label=\textbf{(S\arabic*)'}]
%\setcounter{enumi}{1}
%\item for every $\theta$, $\langle \nabla \phi(\theta), h(\theta)\rangle \leq 0$.
%\end{enumerate} 
%\end{rmk}

%A Liapunov condition such as $\langle\nabla V, h\rangle < - g$ with $\lim_{\|x\|\uparrow\infty}g(x) = \infty$ 
%is stronger than the one with $-\epsilon$ on RHS, one with $-\alpha V + C$ with $\alpha, C > 0$ constants is even stronger. 
%If you use the one with $g$, can manipulate the growth rate of $G$ by replacing $V$ by $f(V)$ for smooth increasing $f$. 
%This may or may not help. If the dynamics comes from an algorithm where usually we expect a solution in some bounded region, 
%there is freedom to tweak the behavior of the algorithm near infinity, e.g., by adding a strong inward drift for large $\|x\|$, 
%so such conditions are not unreasonable.

\begin{proof}
Following similar steps as in \cite[Theorem 3]{sameer} and \textbf{(S2)} we get
\begin{align}
\label{sameer}
E[\phi(\theta_{n+1})|\mathcal{F}_n] \leq \phi(\theta_n) +ca(n)^2(1+\|\theta_n\|^2)\mbox{~a.s.}.
\end{align}
Now we know that, for $n \geq 1$
\begin{align}
 \begin{aligned}
  \|\theta_n\| &\leq \|\theta_{0}\| + \sum_{k=0}^{n-1} a(k)\left[\|f(\theta_k, Y_{k})\|+\|M_{k+1}\|\right]\nonumber \\
               &\leq \|\theta_{0}\| + \tilde{K} \sum_{k=0}^{n-1} a(k) + \tilde{K} \sum_{k=0}^{n-1} a(k)\|\theta_k\|.
 \end{aligned}
\end{align}
Therefore, using a general version of discrete Gronwall inequality (See Appendix)
and the fact that
$\|\theta_{0}\| + \tilde{K} \sum_{k=0}^{n-1} a(k)$ is an increasing function of $n$, we get that
\begin{align}
\|\theta_n\| \leq  \left[ \|\theta_{0}\| + \tilde{K} \sum_{k=0}^{n-1} a(k)\right]\exp(\tilde{K} \sum_{k=0}^{n-1} a(k)). \nonumber
\end{align}
Thus,
\begin{align}
\label{series}
\nonumber
\liminf_n &E[\phi(\theta_n)] < \phi(\theta_0) +ca(0)^2(1+\|\theta_0\|^2) + c\sum_{n=1}^{\infty} a(n)^2 + \\
&c\sum_{n=1}^{\infty} a(n)^2 [\|\theta_{0}\| + \tilde{K} \sum_{k=0}^{n-1} a(k)]^2\exp(2\tilde{K} \sum_{k=0}^{n-1} a(k)).
\end{align}
In the following, we show that for the mentioned step-size sequence, the R.H.S converges.
Assume $0< p < 1$.
Then

\begin{align}
\nonumber
\sum_{i=2}^{n-1}a(i) &\leq \int_{1}^{n-1} \frac{1}{i(\log i)^p}di \nonumber \\
                     &\leq\frac{1}{1-p}(\log n)^{1-p}.\nonumber
\end{align}

Then,
\begin{align}
\nonumber
\sum_{n=2}^{\infty} \frac{(\log n)^{2(1-p)}}{n^2(\log n)^{2p}} \exp\left[\frac{2\tilde{K}}{1-p}(\log n)^{1-p}\right]\nonumber \\
= \sum_{n=2}^{\infty} \frac{(\log n)^{2-4p}}{n^{2+\frac{2\tilde{K}}{(p-1)(\log n)^p}}}.\nonumber
\end{align}
This is a convergent series for $0<p<1$ as there exists an $\epsilon >0$ such that for large $n$
\begin{align}
(\log n)^{2-4p} \leq n^{1- \frac{2\tilde{K}}{(1-p)(\log n)^p}-\epsilon}.\nonumber
\end{align}
Also, the following series converges:
\begin{align}
\nonumber
\sum_{n=2}^{\infty} \frac{\exp\left[\frac{2\tilde{K}}{1-p}(\log n)^{1-p}\right]}{n^2(\log n)^{2p}}.\nonumber \\
\end{align}
Moreover, it is easy to check that the above arguments also hold for $p=1$.
\end{proof}
Thus we show that \textbf{(A5)} in Theorem 3 of \cite{sameer} is not required for the step size sequence of the form
$a(n)=\frac{1}{n(\log n)^p}$ with $0<p\leq 1$ which is clearly a divergent series but $\sum_{n} a(n)^2 < \infty$.

%An example of such a $g$ can be $g(\theta) = \|\theta - \theta^*\|^{1+\epsilon}, 0<\epsilon\leq1$. It is easy to check that
%\textbf{(A8)} will be satisfied for such $g$ outside the unit ball.
%\begin{rmk}
%Our proof also goes through if \textbf{(S2)} is replaced by 
%\begin{enumerate}[label=\textbf{(S\arabic*)''}]
%\setcounter{enumi}{1}
%\item $\langle \nabla g(\theta_n),f(\theta_n,Y_n)\rangle \leq \frac{k(1+\|\theta_n\|^2)}{n(logn)^p}$ 
%\end{enumerate}
%with $0< p \leq 1$
%\end{rmk}
\begin{rmk}
Note that in \textbf{(S2)} we strengthen the usual Lyapunov condition namely $\langle \nabla \phi(\theta), \int f(\theta,y)\tau_\theta(dy)\rangle \leq 0$. However, we show that for linear stochastic approximation \textbf{(S2)} always gets satisfied in the case of finite state Markov chains. Let us assume that $f(\theta,y) = -A\theta + g(y)$ where $g: S \to \mathbb{R}^d$. and $A$ is a $d \times d$ positive definite matrix. Let $\theta^* = \int g(y) \Gamma(dy)$ where the Markov noise does not depend on the iterate. Let $\phi(\theta) = \frac{1}{2}(\theta - \theta^*)^T(\theta - \theta^*)$. Now, the required condition so that \textbf{(S2)} gets satisfied is that
outside some compact set, for all $\theta \in \mathbb{R}^d, y \in S$
\begin{align}
\langle \theta^*, \theta \rangle + \langle \theta, g(y) \rangle \leq \|\theta\|^2 + \langle \theta^*, g(y) \rangle \nonumber 
\end{align}
For a $n$-state Markov chain with stationary distribution $\pi(i),i=1 \dots n$ the above condition gets 
satisfied if 
\begin{align}
\theta \notin \cup_i \{\theta: &\|A\theta - \frac{(1+ \pi(i))g(i) + \sum_{j=1, j \neq i}^n \pi(j)g(j)}{2}\| \leq \nonumber \\ &\frac{\|(1-\pi(i))g(i) - \sum_{j=1, j \neq i}^n \pi(j)g(j)\|}{2} \}\nonumber 
%\{\theta: \|\theta - \frac{(1+\alpha) g(a) + (1-\alpha) g(b)}{2}\| \leq \nonumber \\ 
%&(1-\alpha)\|g(a) - g(b)\|\} \cup \nonumber \\ 
%&\{\theta: \|\theta - \frac{(1+\beta) g(b) + (1-\beta) g(a)}{2}\| \leq \nonumber \\ 
%& (1-\beta)\|g(a) - g(b)\|\} \nonumber
\end{align}
for which a sufficient condition is
\begin{align}
\theta \notin \cup_i \{\theta: \|\theta\| \leq \frac{\|u\| + \|v\|}{\lambda_1} \}\nonumber 
%\{\theta: \|\theta - \frac{(1+\alpha) g(a) + (1-\alpha) g(b)}{2}\| \leq \nonumber \\ 
%&(1-\alpha)\|g(a) - g(b)\|\} \cup \nonumber \\ 
%&\{\theta: \|\theta - \frac{(1+\beta) g(b) + (1-\beta) g(a)}{2}\| \leq \nonumber \\ 
%& (1-\beta)\|g(a) - g(b)\|\} \nonumber
\end{align}
where $u=\frac{(1+ \pi(i))g(i) + \sum_{j=1, j \neq i}^n \pi(j)g(j)}{2}, v=\frac{\|(1-\pi(i))g(i) - \sum_{j=1, j \neq i}^n \pi(j)g(j)\|}{2},\lambda_1>0$ is the smallest eigenvalue of $A$.
%assuming that $\sum_{i=1}^n \pi(i)=1$
\end{rmk}
\begin{rmk}
Theorem 3 of \cite{sameer} imposes assumptions on the strict Lyapunov function $V(.)$
for the attractor $H$ to ensure tightness of the iterates.
For that reason $H$ is required to be a global attractor there. 
However, we observe that $\phi(.)$ can be different from $V(.)$ because
we only require properties like \textbf{(S2)} to ensure tightness of the iterates.
\end{rmk}
\begin{rmk}
Note that the series in the R.H.S of (\ref{series}) won't converge if $a(n) = \frac{1}{n^k}$ with $1/2 < k \leq 1$.
In such a case \textbf{(A5)} from  \cite{sameer} will be required.
\end{rmk}
%\begin{rmk}
%Note that (\ref{stability}) may not be a stronger condition than (\ref{tight}) in general (as the bound in (\ref{stability}) may
%depend on $\omega$), but we show that the sufficient condition
%for the latter can be found easily compared to the former.
%\end{rmk}

%\subsection{Almost sure convergence for stochastic fixed point iteration}
\subsection{Proof of stability and a.s. convergence using our results}
\begin{comment}
If the iteration is a stochastic fixed point one with the function being a
contraction, it is not clear whether the assumptions
to ensure stability of the iterates \cite[Chap. 6.3]{borkar_book} are satisfied.
Therefore for this special case
even the assumptions ensuring asymptotic tightness are not required.
\end{comment}
%In case where the sufficient conditions for Theorem \ref{a.s.} are not satisfied,
Note that if the iterates belong to some arbitrary compact set (depending on the sample point) infinitely often,
it may not imply stability if the time interval between successively visiting it runs to infinity.
 We show that this does not happen if the compact set and the step-size have special properties.
Using the lock-in probability results from Section \ref{main_res}, we prove stability and therefore convergence
of the iterates
on the set $\{\theta_n \in B \mbox{~i.o.}\}$ when the step-size is $a(n) = \frac{1}{n^k}, \frac{1}{2} <k \leq 1$.

Consider the settings described in Section \ref{main_res}.
Let $A = \{\omega: \exists m \geq 0 \mbox{~~s.t.~~} \rho_m(\omega) \geq \delta\}$.
Then Theorem \ref{main_thm} shows that for sufficiently large $n_0$,
\begin{align}
P(A | \theta_{n_0}\in B) < 4de^{-\frac{C}{s(n_0)}} \nonumber \\
%\implies P(A_{n_0} | \theta_{n_0}\in B) I_{\theta_{n_0}\in B)} < 4de^{-\frac{C}{s(n_0)}} \nonumber \\
\implies P(A \cap \{\theta_{n_0}\in B\}) < 4de^{-\frac{C}{s(n_0)}} \nonumber \\
\implies \sum_{n_0 =1}^{\infty} P(A \cap \{\theta_{n_0}\in B\}) < \sum_{n_0 =1}^{\infty} 4de^{-\frac{C}{s(n_0)}}.\label{rhs}
\end{align}
Now, for $n \geq 2$,
\begin{align}
s(n) = \sum_{i=n}^{\infty} \frac{1}{i^{2k}} &< \int_{i=n-1}^{\infty} \frac{1}{i^{2k}} di \nonumber \\
                                         &= \frac{1}{(2k-1)(n-1)^{2k-1}} \nonumber \\
                                         &\leq \frac{1}{(2k-1)(\frac{n}{2})^{2k-1}}\nonumber
\end{align}
Now, for large $n$,
\begin{align}
e^{(2k-1) (\frac{n}{2})^{2k-1}} > n^2.\nonumber
\end{align}

Therefore, the R.H.S in (\ref{rhs}) is finite for the mentioned step-size. The same argument follows for the
step-size $\frac{1}{n (\log n)^{k}}, k\leq 1$ as for large $n$, $(\log n) ^{2k} \geq 1$.

Therefore,
\begin{align}
E[\sum_{n_0 =1}^{\infty} I_{A \cap \{\theta_{n_0}\in B\}}] < \infty \nonumber \\
\implies  I_{A} \sum_{n_0 =1}^{\infty}  I_{\{\theta_{n_0}\in B\}} < \infty \mbox{~a.s.}\nonumber
\end{align}
Therefore on the event $\{\theta_{n_0}\in B \mbox{~i.o}\}$, $I_A =0 \mbox{~a.s.}$ which is nothing but
$\sup_n \|\theta_n\| < \infty$ a.s. The result can be summarized as follows:
\begin{cor}
Under the assumptions made in Section \ref{sec_def} and the following additional assumptions:
\begin{enumerate}[label=\textbf{(W\arabic*)}]
 \item $\forall N,~~~\exists n \geq N$ s.t. $P(\theta_n \in B) > 0$ where $B$ is chosen as in Section \ref{sec_def}, 
 \item $\sum_{n=1}^{\infty} P(\theta_n \in B | \mathcal{F}_{n-1}) = \infty \mbox{~~a.s.}$,
\end{enumerate}
we have 
\begin{align}
\sup_n \|\theta_n\| < \infty \mbox{~~ a.s. and ~~}  \theta_n \to H \mbox{~~ a.s.}, \nonumber
\end{align}
for the step-size sequence of the form $a(n)=\frac{1}{n^k}, 0.5 < k \leq 1$ and $\frac{1}{n(logn)^k}, k \leq 1$. 
\end{cor}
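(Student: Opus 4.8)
The plan is to merge the conditional lock-in estimate of Theorem \ref{main_thm} with a conditional Borel--Cantelli argument driven by \textbf{(W1)}--\textbf{(W2)}. The strategy naturally splits into two threads that are combined at the end: thread (i), a summability (first Borel--Cantelli) estimate showing that the ``bad'' event $A=\{\exists\,m\ge 0:\rho_m\ge\delta_B\}$ cannot occur together with $\{\theta_{n_0}\in B\}$ for infinitely many $n_0$; and thread (ii), a conditional (second) Borel--Cantelli estimate showing that $\{\theta_n\in B\}$ occurs infinitely often almost surely. On the intersection of the favorable events from the two threads, $I_A=0$ is forced, which by the results recalled from \cite{lock_in_original} yields both $\sup_n\|\theta_n\|<\infty$ and $\theta_n\to H$.

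For thread (i) I would first restate the lower bound of Theorem \ref{main_thm} as the upper bound $P(A\mid\theta_{n_0}\in B)<4d\,e^{-C/s(n_0)}$, valid for all sufficiently large $n_0$, where $C=\min(\hat K,\hat C)\delta_B^2/d$. Multiplying by $P(\theta_{n_0}\in B)\le 1$ gives the \emph{unconditional} bound $P(A\cap\{\theta_{n_0}\in B\})<4d\,e^{-C/s(n_0)}$ (trivial when $P(\theta_{n_0}\in B)=0$, and non-vacuous along the subsequence furnished by \textbf{(W1)}). The core of this thread is verifying $\sum_{n_0}e^{-C/s(n_0)}<\infty$ for the two step-size families. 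For $a(n)=n^{-k}$ with $\tfrac12<k\le 1$, an integral comparison bounds $s(n_0)=\sum_{i\ge n_0}i^{-2k}$ by $\tfrac{1}{2k-1}(n_0/2)^{-(2k-1)}$, so $C/s(n_0)$ grows like the positive power $n_0^{\,2k-1}$ and $e^{-C/s(n_0)}$ eventually dominates $n_0^{-2}$; the case $a(n)=1/(n(\log n)^k)$, $k\le 1$, follows from the same comparison together with $(\log n)^{2k}\ge 1$ for large $n$. Monotone convergence then upgrades the finite sum to $\sum_{n_0}I_{A\cap\{\theta_{n_0}\in B\}}<\infty$ a.s., that is $I_A\sum_{n_0}I_{\{\theta_{n_0}\in B\}}<\infty$ a.s.

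For thread (ii) I would apply L\'evy's conditional Borel--Cantelli lemma: each $\{\theta_n\in B\}$ is $\mathcal F_n$-measurable, so \textbf{(W2)}, namely $\sum_n P(\theta_n\in B\mid\mathcal F_{n-1})=\infty$ a.s., gives $P(\theta_n\in B\text{ i.o.})=1$. On this full-measure event $\sum_{n_0}I_{\{\theta_{n_0}\in B\}}=\infty$, so the conclusion of thread (i) forces $I_A=0$ a.s. Since $I_A=0$ means $\rho_m<\delta_B$ for all $m$, the recalled consequences from \cite{lock_in_original} apply: picking a base index $n_0$ (large, with $\theta_{n_0}\in B$, which exists on the i.o.\ event) yields $\sup_{t\ge T_0}\|\bar\theta(t)\|<\infty$, hence $\sup_n\|\theta_n\|<\infty$ a.s., and moreover $\bar\theta(T_n)\in H^\epsilon\subset B$ for all $n$ places the iterates in the compact set $\bar B\subset G$ infinitely often; invoking the Kushner--Clark-type statement quoted from \cite[Section IIC]{benveniste} then gives $\theta_n\to H$ a.s.

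I expect the main obstacle to be the step-size-dependent summability estimate, since this is exactly where the large-vs-small step-size trade-off is encoded: the exponent $1/s(n_0)$ must grow polynomially in $n_0$ to beat the harmonic-type divergence, which is what the constraints $\tfrac12<k\le 1$ (resp.\ $k\le 1$) secure by making $\sum a(n)^2<\infty$ while keeping $\sum a(n)=\infty$. A secondary subtlety is the clean application of the conditional Borel--Cantelli lemma, requiring the adaptedness of $\{\theta_n\in B\}$ to $\{\mathcal F_n\}$ and the almost-sure (not merely in-expectation) divergence asserted in \textbf{(W2)}; here \textbf{(W1)} plays the supporting role of guaranteeing that the lock-in bound of thread (i) is applied along a genuine subsequence of indices carrying positive probability.
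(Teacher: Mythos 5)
Your proposal follows essentially the same route as the paper: the conditional lock-in bound of Theorem \ref{main_thm} is turned into the unconditional bound $P(A\cap\{\theta_{n_0}\in B\})<4d\,e^{-C/s(n_0)}$, summability over $n_0$ is checked for the two step-size families by the same integral comparison, and $I_A\sum_{n_0}I_{\{\theta_{n_0}\in B\}}<\infty$ a.s.\ combined with $\{\theta_n\in B\}$ occurring infinitely often forces $I_A=0$, hence stability and convergence. The only difference is cosmetic: you make explicit the appeal to L\'evy's conditional Borel--Cantelli lemma to derive the i.o.\ event from \textbf{(W2)}, which the paper leaves implicit.
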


\begin{comment}
\begin{rmk}
We compare our result to the statement of Kushner-Clarke lemma \cite[Section III E]{benveniste} which assumes stability of the
iterates and $h$ to be continuous.
%Therefore the interpolated algorithm trajectory is tracked by the o.d.e asymptotically.
Thus
the  only requirement to prove
almost sure convergence is that  the iterates should
enter some compact set in the domain of attraction of $H$ infinitely often.
%Although
%According to Kushner-Clarke lemma, on the set
%where iterates are stable, if the iterates belong to some compact set $A \in G$ i.o. then  $\theta_n \to H$ almost surely  whereas
%we need the iterates to enter some open set with compact closure  infinitely often which \textit{contains} $H$ and is
%in the domain of attraction of $H$
Although we need that $h$ is Lipschitz continuous and
the iterates belong to some open set infinitely often
such that the open set \text{contains} the local attractor and its compact closure is in the
domain of attraction of the local attractor, we do not assume stability of the iterates to prove a.s. convergence.

%A similar result was proved in \cite[Theorem 2.1]{lock_in_original} for
%Lipschitz vector field and local attractor although it used stability of the iterates.
\end{rmk}
\end{comment}
%\begin{rmk}
%This can be useful in off-policy reinforcement learning scenarios with function approximation (e.g. Q learning) where
%proving stability of the iterates  and therefore may not converge.
%\end{rmk}
%\begin{comment}
\subsection{Comparison of our results with related literature}
\label{comp}
In this section we compare our results and assumptions with the related literature. In \cite{andrieu1}, the 
truncations
on adaptive truncation sets from \cite{fu_chen} has been extended to the case where the noise is Markov. 
It is mentioned there that the procedure they follow is 
different
in some respects from the original
procedure proposed by \cite{fu_chen}. To prove that the number of re-initializations of the procedure described in 
\cite[Section 3.2]{andrieu1} is finite, they establish a bound on the probability that the $n$-th reinitialization time is 
finite in terms of the fluctuations of the noise sequence of the algorithm between successive re-initializations.
Here we compare our assumptions with the assumptions made in \cite{andrieu1} in detail. \textbf{(A1)} therein assumes existence of a global attractor $L$ (say the corresponding Lyapunov function is $w$) whereas our results are true for local attractors. The convergence proof assuming stability \cite[Theorem 2.3]{andrieu1} therein heavily depends on the assumption on global attractor. Moreover, \cite{andrieu1} additionally assumes that there exists $M_0 > 0$ such that 
$L \subset \{\theta: w(\theta) < M_0\}$. This kind of assumption gets satisfied if the Lyapunov function is quadratic. It is not clear whether this assumption gets satisfied for non-quadratic Lyapunov functions. Now consider the assumption that there exists $M_1 \in (M_0 , \infty]$ such that $\{\theta: w(\theta) \leq M_1 \}$ is a compact set. Clearly, this is a closed set, however, there is no gurantee that this will be bounded. Next, look at the assumption that the closure of $w(L)$ has an empty interior. Using Sard’s theorem one can say that if $w$ is
$d$-times continuously differentiable, then $w(\{\nabla w = 0\})$ has an empty interior. It is not clear whether this condition also gets satisfied in the case of non-quadratic Lyapunov function. In order to control the fluctuations some less classical assumptions have been imposed on the transition kernel (regularity properties in $V$ and $V^p$ norm) as well 
as on the vector field (see (DRI2) and (DRI3) and the discussion thereafter) whereas our assumptions on Poisson equation as well as transition kernel are classical.

In case when the noise is markov if one tries to naturally extend the classical Borkar-Meyn theorem \cite{borkar_meyn}, the following problem arises in \cite[Chapter 6, Lemma 5]{borkar_book}. Let $\theta^n(\cdot), n=1,2,\dots$ denote solutions to the following o.d.e 
\begin{align}
\dot{\theta}(t)=\tilde{h}_c(\theta(t), \mu(t)) 
\end{align}
with $c=r(n)$ and $\mu(.)$ replaced by $\mu^n(.)$ where $h_c(\theta,y) = \frac{h(c\theta,y)}{c}$. $\theta^{\infty}(\cdot)$ denote the solutions of the same o.d.e with $\mu(.)$ replaced by $\mu^{\infty}(\cdot)$ and $c=\infty$ where  $h_{\infty}(\theta,y) = \lim_{c \to \infty} \frac{h(c\theta,y)}{c}$. Now it is easy to see that to prove that the scaled trajectory is tracked by the infinity system one needs to consider the following difference
\begin{align}
\|\theta^n(t) - &\theta^{\infty}(t)\| \leq \|\theta^n(0) - \theta^{\infty}(0)\| + \nonumber\\
                                     &\int_{0}^{t}\| \tilde{h}_{r(n)}(\theta^n(s), \mu^n(s))- \tilde{h}_{r(n)}(\theta^{\infty}(s), \mu^n(s))\|ds + \nonumber\\ 
                                     &\|\int_{0}^{t} \tilde{h}_{r(n)}(\theta^{\infty}(s), \mu^n(s))- \tilde{h}_{\infty}(\theta^{\infty}(s), \mu^n(s))ds\| + \nonumber\\ 
                                     &\|\int_{0}^{t} \tilde{h}_{\infty}(\theta^{\infty}(s), \mu^n(s))- \tilde{h}_{\infty}(\theta^{\infty}(s), \mu^{\infty}(s))ds\|
\end{align}
Therefore one needs to assume that for every $T$,
\begin{align}
\sup_{t \in [0,T]}  \|\int_{0}^{t} \tilde{h}_{r(n)}(\theta^{\infty}(s), \mu^n(s))- \tilde{h}_{\infty}(\theta^{\infty}(s), \mu^n(s))ds\| \to 0 \nonumber
\end{align}. Further one needs to assume 1) Lipschitz continuity of $h$ in the first component \textit{uniformly} w.r.t 
the second and 2) $h$ being jointly continuous.

One recent work \cite{arun_markov} finds sufficient conditions for stability  (almost  sure  boundedness) of stochastic approximation algorithms (SAs) driven by a `controlled Markov' process under a new set of assumptions compared to the stability criteria for the martingale noise case \cite{borkar_meyn} whereas our tightness conditions for Markov noise are clearly stronger than the corresponding martingale noise case \cite{sameer}, therefore the extension is more justifiable than the extension considered in \cite{arun_markov}. Moreover, our assumptions on vector field (see \textbf{(A2)}) are weaker than the assumptions in \cite{arun_markov} (See \textbf{(A1)}). For example, consider the linear stochastic approximation case presented in Section \ref{a.s.conv} as an example to satisfy the tightness condition. If $g(.)$ is a bounded discontinuous function then \textbf{(A2)} gets satisfied however \textbf{(A1)}) from \cite{arun_markov} does not.

Another related work is \cite{yaji_lock} where the behavior of stochastic approximation schemes with set-valued maps
in the absence of a stability guarantee is analyzed, however this work does not consider Markov noise in its analysis. 

\section{Lock-in probability calculation for iterates with different timescales: tracking ability of ``general'' adaptive algorithms using lock-in probability}
\label{track}
In this section we investigate the tracking ability of algorithms of the type:
\begin{align}
\label{fast_alg}
w_{n+1} &= w_n + b(n)\left[g(\theta_n, w_n, Z^{(2)}_n) + M^{(2)}_{n+1}\right],
\end{align}
that are driven by a ``slowly'' varying single timescale stochastic approximation process:
\begin{align}
\label{slow_process}
\theta_{n+1} = \theta_n + a(n)\left[h(\theta_n,Z^{(1)}_n) + M^{(1)}_{n+1}\right], 
\end{align}
when none of the iterates are known to be stable. Here, 
$\theta_n \in \mathbb{R}^d, w_n \in \mathbb{R}^k, Z^{(1)}_n \in \mathbb{R}^l, Z^{(2)}_n \in \mathbb{R}^m$.
Note that there is a unilateral coupling between (\ref{fast_alg}) and (\ref{slow_process}) in that 
(\ref{fast_alg}) depends on (\ref{slow_process}) but not the other way.
Suppose $w_n$ converges to a function $\lambda(\theta)$ in case $\theta_n$ is kept constant at $\theta$, then an 
interesting  question is that if $\theta_n$ changes slowly, 
can $w_n$ track the changes in $\theta_n$, i.e., what can we say about the quantity $\|w_n-\lambda(\theta_n)\|$ in the limit.
As mentioned in \cite{konda_linear} such algorithms may arise in the context of adaptive algorithms.
However, in that work, tracking was proved under the restrictive assumption that the stochastic approximation driven by the slowly varying process 
is linear (see (1) in the same paper) and the underlying Markov process in the faster iterate 
is driven by only the slow iterate. 
Using the lock-in probability results of Section \ref{main_res} 
we prove convergence as well as tracking ability of much general algorithms such as 
(\ref{fast_alg})-(\ref{slow_process}) under the following assumptions (we also 
give a detailed comparison with  the assumptions of \cite{konda_linear}):
\begin{enumerate}[label=\textbf{(B\arabic*)}]
 \item $h, Z^{(1)}_n$ and $M^{(1)}_{n+1}$ satisfy the same assumptions satisfied by  
similar quantities ($f, Y_n, M_n$ respectively) of Section \ref{sec_def}.
$g$ satisfies the following assumption:
$\sup_{z}\|g(\theta,w,z)\| \leq K_1 (1+\|\theta\|+ \|w\|+ \|z\|)$ for all $\theta,w,z$ where $K_1 >0$.
Additionally, $\hat{g}(\theta,w) = \int g(\theta, w,z)\Gamma^{(2)}_{\theta,w}(dz)$ is Lipschitz continuous, 
$\Gamma^{(2)}_{\theta,w}$ being the unique stationary distribution of $Z^{(2)}_n$ for a fixed $(\theta,w)$ pair. 
\begin{rmk} In 
(1) of \cite{konda_linear}, the vector field in the faster iterate is linear in the faster iterate variable. Also, the slower
iterate is not a stochastic approximation iteration there.
\end{rmk}
 \item $\{a(n)\}$ is as in \textbf{(A3)}. $\{b(n)\}$ satisfies similar assumptions as $\{a(n)\}$. 
Additionally, $a(n) < b(n) < 1$ for all $n$ and $\frac{a(n)}{b(n)} \to 0$. 
\begin{rmk}
The latter is a much 
weaker requirement than Assumption 4 of \cite{konda_linear}. 
\end{rmk}
 \item The dynamics of $Z^{(2)}_n, n\geq 0,$ is specified by
\begin{align}
P(Z^{(2)}_{n+1} \in B |Z^{(2)}_m, \theta_m, w_m, m\leq n) = \int_{B} \Pi^{(2)}_{\theta_n, w_n}(Z^{(2)}_n; dz), \nonumber \\ 
\mbox{~a.s.,} n\geq 0, \nonumber 
\end{align} 
for $B$ Borel in $\mathbb{R}^m$. 
Assumptions similar to \textbf{(A1)}, \textbf{(A4)}, \textbf{(A6)} and \textbf{(A7)} will be true in the case of $Z^{(2)}_{n}$ also 
with the exception that now $\theta$ will be replaced by the tuple $(\theta,w)$. 
\begin{rmk}
In \cite{konda_linear}, 
the Markov process depends on only the slow parameter.
\end{rmk}
 \item $\{M^{(i)}_n\}, i=1, 2$ are martingale difference sequences
w.r.t the increasing $\sigma$-fields
\begin{align}
\mathcal{F}_n = \sigma(\theta_m, w_m, M^{(i)}_{m}, Z^{(i)}_m, m \leq n, i=1,2), n \geq 0,\nonumber 
\end{align}
where $M^{(2)}_n, n\geq 0,$ satisfy the following:
\begin{align}
\|M^{(2)}_{n+1}\| \leq K_2(1 + \|\theta_n\| + \|w_n\|), K_2>0.\nonumber 
\end{align}. 
\begin{rmk}
Our assumptions on martingale difference noise 
are stronger than the same in \cite{konda_linear} (See Assumption 5). 
\end{rmk}
\item The o.d.e
\begin{align}
\dot{w}(t) = \hat{g}(\theta,w(t)) \nonumber
\end{align}
has a global attractor $\lambda(\theta)$ with $\lambda: \mathbb{R}^d \to \mathbb{R}^k$ being Lipschitz 
continuous.

The o.d.e 
\begin{align}
\label{slow}
\dot{\theta}(t)=\hat{h}(\theta(t)) 
\end{align}
has an asymptotically stable set $H^s$ with domain of attraction $G^s$ 
where $\hat{h}(\theta) = \int h(\theta,y) \Gamma^{(1)}_\theta(dy)$ is Lipschitz continuous with 
$\Gamma^{(1)}_\theta$ is the same as $\Gamma_\theta$ in \textbf{(A4)}.

For every compact set $C_1 \subset \mathbb{R}^d$, the set 
$\{(\theta,\lambda(\theta)): \theta \in C_1\}$ is Lyapunov stable.
\item The iterates $\{\theta_n,w_n\}$ are asymptotically tight (for which a sufficient condition is stated later). 
\begin{rmk}
In 
\cite{konda_linear} an important step in the proof is the proof of the stability of the iterates.
\end{rmk}
\end{enumerate}
%Let $V_1$ be the Lyapunov function for the local attractor of the o.d.e (\ref{slow}) and $V_2$ be the 
%Lyapunov function for the local attractor $\{(\theta,\lambda(\theta)): \theta \in C_1\}$ of the coupled o.d.e
%\begin{align}
%\label{couple}
%\dot{\alpha}(t) = \hat{G}(\alpha(t)) 
%\end{align}
\begin{rmk}
A recent work \cite{borkar_two} provides a sample complexity estimate for two time-scale stochastic approximation using Alekseev formula, however, under the assumption that the vector fields as well as $\lambda$ being continuously differentiable and the attractors are single points. 
\end{rmk}
\begin{thm}
Under the above assumptions, for sufficiently large $n_0$,
\begin{align}
&P((\theta_n,w_n) \to \bigcup_{\theta \in H^s} (\theta, \lambda(\theta))|\theta_{n_0} \in B_1, w_{n_0} \in B_2). \nonumber \\ 
%&\geq \left(1-2de^{-\frac{\hat{K^s}{\delta_{B_1}}^2}{dS_1(n_0)}}- 2d e^{-\frac{\hat{C^s}{\delta_{B_1}}^2}{dS_1(n_0)}}\right)\left(1 -\frac{2ke^{-\frac{\hat{K^c}{\delta_{B_2}}^2}{kS_2(n_0)}}+2k e^{-\frac{\hat{C^c}{\delta_{B_2}}^2}{kS_2(n_0)}}}{1-2de^{-\frac{\hat{K^s}{\delta_{B_1}}^2}{dS_1(n_0)}}- 2d e^{-\frac{\hat{C^s}{\delta_{B_1}}^2}{dS_1(n_0)}}}\right)\nonumber 
&\geq \left(1-o(S_1(n_0))\right)\left(1-\frac{o(S_2(n_0))}{1-o(S_1(n_0))-o(S_2(n_0))}\right)\nonumber
\end{align} 
%$\hat{K^c}, \hat{C^c},\hat{K^s}, \hat{C^s}$ are versions of the constants $\hat{K}, \hat{C}$ of Section \ref{main_res} for 
%fast and slow timescale respectively. 
\end{thm}
\begin{proof}
Let there be an open set $B_1$ with compact closure such that $H^s \subset B_1 \subset \bar{B_1} \subset G^s$.
From the results of Section \ref{main_res}, we can find a $T^s$ such 
that any trajectory for the o.d.e (\ref{slow}) starting in $\bar{B_1}$ will be within some $\epsilon_1$-neighborhood of $H^s$ after time $T^s$. Let,  $S_1 = \left[\sup_{\theta \in \bar{B_1}}\|\theta\| +\tilde{K}\right] e^{\tilde{K}T^s}$, 
%\hat{G}(\alpha) = \int G(\alpha,z)\Gamma^2(\alpha)(dz)$ and $G(\alpha,z)= (0,g(\alpha,z))$. 
and $C_1=\{\theta: \|\theta\| \leq S_1\}$. 
Let there be an open set $B_2$ with compact closure such that 
$\lambda(C_1) \subset B_2 \subset \bar{B_2} \subset \mathbb{R}^k$. Choose $\delta_{B_1}$ 
in the same way that $\delta_B$ is chosen in Section \ref{main_res}. Choose $\delta_{B_2}, 0<\epsilon''_1< \epsilon''$ such that 
$N_{\delta_{B_2} + \epsilon''_1}(\lambda(C_1)) \subset  N_{\epsilon''}(\lambda(C_1)) \subset B_2$. 
If the coupled o.d.e starts at a point such that its $\theta$ and $w$ co-ordinates are in $C_1$ and $\bar{B_2}$ 
respectively then as in Section \ref{main_res} one can find a $T^f >0$ (independent of the starting point) 
that is the maximum time required for the 
the o.d.e to be in the $\epsilon''_1$-neighbourhood of $\{(\theta,\lambda(\theta)): \theta \in C_1\}$. 
%Also 
%results of Section \ref{main_res} shows that if the o.d.e (\ref{slow}) starts within $\bar{B_1}$ then 
%after time $T^s_0$ it will always be within $C_1$. 
Now, 
let $T^c = \max(T^f, T^s+1)$ and 
for $m \geq 1$ define,   
%Then as in Section \ref{main_res} 
%one can find threshold times $T^s$ and $T^c$ (which was $T$ in Section \ref{main_res}) for slow and 
%fast timescales from the respective Lyapunov functions. Let, 
\begin{align}
n^c_0 = n^s_0 = n_0, \nonumber \\
t^c(n) = \sum_{i=0}^{n-1} b(i),  n^c_m = \min\{n: t^c(n) \geq t^c(n^c_{m-1}) + T^c\}, \nonumber \\
t^s(n) = \sum_{i=0}^{n-1} a(i),  n^s_m = \min\{n: t^s(n) \geq t^s(n^s_{m-1}) + T^s\}. \nonumber
\end{align}
Similarly, for $m \geq 0$, define 
\begin{align}
T^c_m = t^c(n^c_m), I^c_m = [T^c_m, T^c_{m+1}], \\
T^s_m = t^s(n^s_m), I^s_m = [T^s_m, T^s_{m+1}], \\
l_m = \max(k: t^s(n^s_k) \leq t^c(n^c_m)). \nonumber
\end{align}
Now define, 
\begin{align}
\rho^c_m:=\sup_{t\in I^c_m}\|\bar{\alpha}(t) - \alpha^{T^c_m}(t)\| \nonumber
\end{align}
where $\bar{\alpha}(.)$ is the interpolated trajectory for the coupled iterate 
\begin{align}
\label{coupled}
\alpha_{n+1} = \alpha_n + b(n)\left[G(\alpha_n,Z^{(2)}_n) + \epsilon'_n +  M^{(4)}_{n+1}\right]
\end{align}
where $\alpha_n=(\theta_n, w_n)$. Let  $\epsilon_n = \frac{a(n)}{b(n)}h(\theta_n, Z^{(1)}_n)$ and $M^{(3)}_{n+1} = \frac{a(n)}{b(n)} M^{(1)}_{n+1}$ for $n\geq 0$. 
Now let $\alpha=(\theta,w) \in \mathbb{R}^{d+k}, G(\alpha,z)=(0, g(\alpha,z)), \epsilon'_n=(\epsilon_n, 0), 
M^{(4)}_{n+1}= (M^{(3)}_{n+1}, M^{(2)}_{n+1})$,
and $\alpha^{T^c_m}(.)$ is the solution of the o.d.e 
\begin{align}
\dot{w}(t) = \hat{g}(\theta(t),w(t)), \dot{\theta}(t) = 0,\nonumber 
\end{align} on $I^c_m$ with the initial point $\alpha^{T^c_m}(T^c_m) = \bar{\alpha}(T^c_m)$.
Also, define 
\begin{align}
\rho^s_m:=\sup_{t\in I^s_m}\|\bar{\theta}(t) - \theta^{T^s_m}(t)\| \nonumber
\end{align}
where $\theta^{T^s_m}(.)$ denotes the solution of the o.d.e (\ref{slow}) on $I^s_m$ with the initial point $\theta^{T^s_m}(T^s_m)=\bar{\theta}(T^s_m)$. 
Let us assume for the moment that $\theta_{n_0} \in B_1, w_{n_0} \in B_2$, and that $\rho^s_m < \delta_{B_1}$ and 
$\rho^c_m < \delta_{B_2}$.  
 Then using similar arguments as in Section \ref{main_res}, one can show that 
$\sup_{t \geq T^c_0} (\bar{\theta}(t),\bar{w}(t))< \infty \mbox{~~a.s.}$. 
Further, the sequence of types $(\theta_n,w_n), n\geq 0$ infinitely often visits the 
compact set $C_1 \times \bar{B_2}$ which is in the domain of attraction  
$C_1 \times \mathbb{R}^d$ of the set $\{(\theta, \lambda(\theta)) :  \theta \in C_1\}$. Therefore,
%for $t \geq T^c_1$ and $\bar{\theta}(t) \in B_1$ for $t \geq T^s_1$  
\begin{align}
(\theta_n, w_n) \to \{(\theta, \lambda(\theta)) :  \theta \in \mathbb{R}^d\} \mbox{~~a.s.~~} \nonumber 
\end{align}This, in turn, implies that $\|w_n - \lambda(\theta_n)\| \to 0$ a.s. which implies that 
$(\theta_n, w_n) \to \bigcup_{\theta \in H^s} (\theta, \lambda(\theta))$. 
Let $\mathcal{B}^s_{m}$ denote the event that $\theta_{n_0} \in B_1, w_{n_0} \in B_2$ and $\rho^s_k < \delta_{B_1}$ for $k=0,1,\dots, 
m$. Also, let $\mathcal{B}'_{m,k}$ denote the event that  $\theta_{n_0} \in B_1, w_{n_0} \in B_2$, $\rho^c_j < \delta_{B_2}$ for $j=0,1,\dots, 
m$ and $\rho^s_j < \delta_{B_1}$ for $j=0,1,\dots,
k$.
Therefore, 
\begin{align}
&P((\theta_n,w_n) \to \bigcup_{\theta \in H^s} (\theta, \lambda(\theta))|\theta_{n_0} \in B_1, w_{n_0} \in B_2) \nonumber \\
&\geq  P\left[\rho^c_m < \delta_{B_2} \forall m \geq 0, \rho^s_m < \delta_{B_1} \forall m \geq 0  | \theta_{n_0}\in B_1, w_{n_0} \in B_2
 \right] \nonumber \\
%& \geq 1- P(\exists m \geq 0 \mbox{~~s.t.~~} \rho^c_m \geq \delta_{B_2}|\theta_{n_0} \in B_1, w_{n_0} \in B_2) - P(\exists m \geq 0 \mbox{~~s.t~~} \rho^s_m \geq \delta_{B_1}|\theta_{n_0} \in B_1, w_{n_0} \in B_2)\nonumber \\ 
& \geq P\left[\rho^s_m < \delta_{B_1} \forall m \geq 0 | \theta_{n_0}\in B_1, w_{n_0} \in B_2\right] \times \nonumber \\  
& P\left[\rho^c_m < \delta_{B_2} \forall m \geq 0 | \theta_{n_0}\in B_1, w_{n_0} \in B_2, \rho^s_m < \delta_{B_1} \forall m \geq 0\right] \nonumber \\
& \geq \left[1 - \sum_{m=0}^{\infty} P(\rho^s_m > \delta_{B_1}| \mathcal{B}^s_{m-1})\right] \times \nonumber \\  
&P\left[\rho^c_m < \delta_{B_2} \forall m \geq 0 | \theta_{n_0}\in B_1, w_{n_0} \in B_2, \rho^s_m < \delta_{B_1} \forall m \geq 0\right]\label{lockin}.
\end{align}
Now, using the simple fact that $P(A|BC) \leq \frac{P(A|B)}{P(C|B)}$,
\begin{align}
&P\left[\rho^c_m < \delta_{B_2} \forall m \geq 0 | \theta_{n_0}\in B_1, w_{n_0} \in B_2, \rho^s_m < \delta_{B_1} \forall m \geq 0\right] \nonumber \\
&\geq \left[1- \sum_{m=0}^{\infty}\frac{P(\rho^c_m > \delta_{B_2}|\mathcal{B}'_{m-1,l_m-1})}{P\left[\rho^s_k < \delta_{B_1} \forall k \geq l_m | 
\mathcal{B}'_{m-1,l_m-1}\right]}\right] \nonumber \\
&=\left[1- \sum_{m=0}^{\infty}\frac{P(\rho^c_m > \delta_{B_2}|\mathcal{B}'_{m-1,l_m-1})}{1- f(m)- g(m)}\right]\label{fastrho}
%P\left[\rho^s_m < \delta_{B_1} \forall m \geq 0 | \theta_{n_0}\in B_1, w_{n_0} \in B_2, \rho^c_m < \delta_{B_2} \forall m \geq 0 \right] 
\end{align}
where $$f(m) = P(\rho^s_{l_m} > \delta_{B_1}|\mathcal{B}'_{m-1,l_m-1})$$ 
and $$g(m) = \sum_{k=l_m+1}^{\infty} P\left[\rho^s_k > \delta_{B_1} | \mathcal{B}'_{m-1,k-1}\right].$$
Clearly, $\mathcal{B}'_{m-1,l_m-1} \in \mathcal{F}_{n^c_m}$ and $\mathcal{B}'_{m-1,k-1} \in \mathcal{F}_{n^s_k}$ for all 
$k\geq l_m+1$. However, $\mathcal{B}'_{m-1,l_m-1} \notin \mathcal{F}_{l_m}$. 
Therefore, the tedious task is to calculate an upper bound for $f(m)$. We describe the procedure for doing so in detail.
Now, due to the way $T^c$ is chosen 
\begin{align}
&f(m) \leq \frac{P(\rho^s_{l_m} > \delta_{B_1}|\mathcal{B}'_{m-2, l_m-1})}
{1- \frac{P(\rho^c_{m-1} > \delta_{B_2} | \mathcal{B}'_{m-2, l_{m-1}-1})}{
1- f(m-1) -
\sum_{k=l_{m-1}+1}^{l_m-1} h(k)}} \label{recurse} 
\end{align}
where $$h(k) = P(\rho^s_{k} > \delta_{B_1} | \mathcal{B}'_{m-2,k-1}).$$

Let $S_1(n_0) = \sum_{i=n_0}^{\infty} a(i)^2$ and $S_2(n_0) = \sum_{i=n_0}^{\infty} b(i)^2$.

From (\ref{recurse}) we can see that 
\begin{align}
f(m) \leq \frac{o(S_1(n_0))}{1- \frac{o(S_2(n_0))}{1-f(m-1) - o(S_1(n_0))}}. \nonumber  
\end{align}
 
One can recursively calculate the expression. At the bottom level one calculates the following expression:
\begin{align}
1-P(\rho^s_{l_{1}-1} > \delta_{B_1} |\mathcal{B}^s_{l_{1}-2}).\nonumber 
\end{align}

Using the fact that $S_1(n_0) < S_2(n_0)$ we see from the above that for all $m \geq 0$, $f(m) \leq o(S_2(n_0))$. 
One can easily show using the technique of Section \ref{main_res} 
that for all $m \geq 0$, $g(m) \leq o(S_1(n_0))$. 
%where $B^c_m := \{\theta_{n_0} \in B_1, w_{n_0} \in B_2, \rho^c_k < \delta_{B_2},  \forall k =0,1,\dots,m, \rho^s_k < \delta_{B_1} \forall k =0,1,\dots,[n^c_m]\} \in \mathcal{F}_{n_{m+1}}, 
% B^s_m = \{\theta_{n_0} \in B_1, w_{n_0} \in B_2, \rho^s_k < \delta_{B_1} \forall k =0,1,\dots,m\} \in \mathcal{F}_{n_{m+1}}$.

Handling the first term in the last inequality of  (\ref{lockin}) is exactly the same as in Section \ref{main_res}. 
The numerator of the term inside the summation in (\ref{fastrho}) can also be handled 
in a similar manner except the fact that the additional error $\epsilon'_n$ in (\ref{coupled}) can be made negligible on $\mathcal{B}'_{m-1,l_m-1}$
using the stability of the iterates there  over 
$T^c$ length intervals (the latter can be proved as in Lemma \ref{T_stability}). $n_0$ will be the maximum of 
its versions arising to handle these two parts.
\end{proof}

\begin{rmk}
For the case of Section \ref{sec_def}, i.e., $\theta_n = \theta$ for all $n$, either 1) $a(n)=0$ or 2) $h(\theta_n, Z^{(1)}_{n}) + M^{(1)}_{n+1} =0$ for all $n$. 
%The latter will also imply that .
Further, all the assumptions $(\mathbf{B1}) -  (\mathbf{B6})$
are satisfied and we can recover the results of Section \ref{main_res} by observing that either 1) $S_1(n_0) =0$ or 2) $M^{(1)}_n =0$ for all $n$ (follows 
from the fact that $\{M^{(1)}_n\}$ is a martingale difference sequence). 
\end{rmk}

From this one can easily prove almost sure convergence under tightness.
\begin{thm}
	\label{adapt}
	Under \textbf{(B1)-(B6)}, if $\{\alpha_n\}$ is asymptotically tight and $\liminf_n P(\theta_n \in G^s) =1$ then $P((\theta_n,w_n) \to \bigcup_{\theta \in H^s} (\theta, \lambda(\theta))) =1$ ,i.e.,  
	$\|w_n - \lambda(\theta_n)\| \to 0$ a.s.
\end{thm}

The sufficient conditions for tightness can be derived in the exact
same way as in Section \ref{a.s.conv}. 
\begin{lem}
Suppose there exists a $V': \mathbb{R}^{d+k} \to [0,\infty)$ such that $V'(\alpha) \to \infty$ as $\|\alpha\| \to \infty$ and the following properties hold:
Outside the unit ball,
\begin{enumerate}[label=\textbf{(S\arabic*)}]
 \item $V'$ is twice differentiable and all second order derivatives are bounded
by some constant $c$.
 \item For every $\alpha \in \mathbb{R}^{d+k}$, $K \subset \mathbb{R}^l$ compact, $\langle {(\nabla V'(\alpha))}_{1 \dots d}, h({(\alpha)}_{1 \dots d},z)\rangle \leq 0$ for all $z\in K$.
 \item for every $\alpha \in \mathbb{R}^{d+k}$, $K \subset \mathbb{R}^m$ compact, $\langle {(\nabla V'(\alpha))}_{d+1 \dots d+k}, g(\alpha,z)\rangle \leq 0$ for all $z\in K$.
\end{enumerate}
Here, the notation $(v)_{m \dots n}$ stands for the vector $(v_m, \dots, v_n)$ with $v =(v_1,v_2,\dots,v_{d+k}) \in \mathbb{R}^{d+k}$.

Then for the step size sequences of the form $b(n)=\frac{1}{n(\log n)^p}, n\geq 2 $ with $0< p \leq 1$, the iterate sequence  
$\{\alpha_n\}$ is asymptotically tight.
\end{lem}

%This along with the assumption that $\liminf P(\theta_n \in G^s)=1$ will imply that 
%$\|w_n - \lambda(\theta_n)\| \to 0$ a.s. as well as 
%$(\theta_n,w_n) \to \bigcup_{\theta \in H^s} (\theta, \lambda(\theta))$.
%\end{comment}
%\begin{comment}
\section{Sample Complexity}
\label{sample}
It is easy to check that using the results in the previous section one can get a similar
probability estimate for sample complexity as in \cite[Chap. 4, Corollary 14]{borkar_book}. Note that here $T$ can be any positive real
number unlike in the lock-in probability calculation where we need to choose $T$ appropriately. Therefore
we can extend the sample complexity calculation for stochastic fixed point iteration
in the setting of  Markov iterate-dependent noise as follows:

%\begin{align}
%N_0:=\min \left{ n : \sum_{i=n_0 +1}^n} a(i) \geq \frac{(V(\theta_{n_0})-\epsilon)(T+1)}{\Delta/2}
%\end{align}
%more iterates to get within $\delta_{\epsilon} + \epsilon +  \Delta/2$ where $n_0$ should satisfy (\ref{n_0})
Consider the example as in \cite[p.~43]{borkar_book} that we describe below. Let $u(\theta) = \int f(\theta, y)\Gamma_\theta(dy)$ with
$u$ being a contraction, so that $\|u(\theta) - u(\theta')\| < \alpha \|\theta-\theta'\|$ for some $\alpha \in (0,1)$. Let
$\theta^*$ be the unique fixed point of $u(.)$. Let $T>0$ and $B$ be chosen to be  $\{\theta: \|\theta - \theta^*\| < r\}$
with $r \geq \frac{3\epsilon}{2}$. For the analysis, choose $r=\frac{3\epsilon}{2}$.
Then the sample complexity estimate can be stated as follows:
\begin{cor}
Let a desired accuracy $\epsilon >0$ and confidence $0<\gamma<1$ be given. Let $\bar{\theta}$ be the value at iteration $n_0$
with $n_0$ satisfying:
\begin{enumerate}
 \label{b_n}
 \item $n_0$ sufficiently large as in (\ref{large}), $s(n_0) < \frac{\hat{C}\epsilon^2}{4}$ and
$a(n_0) < \frac{\hat{C}\epsilon^2}{4}$ (Theorem 11 of \cite[Chap. 4]{borkar_book}).
 \item \begin{align}
        s(n_0) < \frac{c\epsilon^2}{\ln(\frac{4d}{\gamma})}.
       \end{align}
\end{enumerate}
Then on the event $\{\bar{\theta} \in B\}$, one needs
\begin{align}
N_0 := \min \left[n: \sum_{i=n_0 + 1}^{n}a(i) \geq \frac{(T+1)}{(1-e^{-(1-\alpha)T})}\right]-n_0\nonumber
\end{align}
more iterates to get within a distance $2\epsilon$ of $\theta^*$ with probability at least $1-\gamma$.
\end{cor}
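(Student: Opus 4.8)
The plan is to transcribe the martingale-noise sample-complexity argument of \cite[Corollary~14]{borkar} into the present Markov iterate-dependent setting, substituting Theorem \ref{main_thm} and the error decomposition built for it (the terms $A_n,B_n,C_n,D_n$ and Lemma \ref{T_stability}) for the martingale-difference estimates used there. The one structural input special to this example is the exponential contraction of the limiting flow: since $u$ is an $\alpha$-contraction with fixed point $\theta^*$, writing $h=u-\mathrm{id}$ and differentiating $V(\theta)=\|\theta-\theta^*\|^2$ along the flow $\Phi_t$ of (\ref{eqn2}) gives $\tfrac{d}{dt}\|\Phi_t(\theta)-\theta^*\|^2\le -2(1-\alpha)\|\Phi_t(\theta)-\theta^*\|^2$, hence $\|\Phi_t(\theta)-\theta^*\|\le e^{-(1-\alpha)t}\|\theta-\theta^*\|$. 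This is the only role of the contraction hypothesis, and, in contrast with the lock-in calculation where $T$ had to be large enough to reach $H^{\epsilon_1}$, here every $T>0$ already supplies a genuine per-interval contraction factor $e^{-(1-\alpha)T}<1$.

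I would then reuse verbatim the interval machinery of Section \ref{main_res} (the times $T_m$, indices $n_m$, interpolation $\bar\theta(\cdot)$, o.d.e.\ pieces $\theta^{T_m}(\cdot)$, and gaps $\rho_m$). Setting $\beta_m:=\|\bar\theta(T_m)-\theta^*\|$ and combining the contraction over $[T_m,T_{m+1}]$ (o.d.e.\ length at least $T$) with the definition of $\rho_m$ yields the one-step bound $\beta_{m+1}\le e^{-(1-\alpha)T}\beta_m+\rho_m$. On the event that $\rho_m<\delta_B$ for all relevant $m$, with $\delta_B$ chosen proportional to $\epsilon\,(1-e^{-(1-\alpha)T})$ so that the geometric residual $\delta_B/(1-e^{-(1-\alpha)T})$ is at most $\epsilon/2$, unrolling gives $\beta_m\le e^{-(1-\alpha)Tm}\tfrac{3\epsilon}{2}+\delta_B/(1-e^{-(1-\alpha)T})$. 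Taking $m^*\approx(1-e^{-(1-\alpha)T})^{-1}$ makes the transient factor satisfy $e^{-(1-\alpha)Tm^*}\le e^{-1}$ (from $\ln x\le x-1$), so $\beta_{m^*}<e^{-1}\tfrac{3\epsilon}{2}+\tfrac{\epsilon}{2}<2\epsilon$, and the inter-sample deviation (another $\delta_B$) keeps the whole trajectory within $2\epsilon$ thereafter. Since each interval carries o.d.e.\ time in $[T,T+1]$, the overshoot being bounded by $T_{m+1}-T_m\le T+a(n_{m+1}-1)\le T+1$ via the monotonicity of \textbf{(A3)} and $a(\cdot)\le 1$, the elapsed time needed to complete these $m^*$ intervals is at most $(T+1)/(1-e^{-(1-\alpha)T})$, which is exactly the threshold in the definition of $N_0$ through the identity $\sum_{i=n_0+1}^{n}a(i)=t(n)-t(n_0)$ (up to a boundary term).

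It remains to pin down the probability. By Theorem \ref{main_thm}, conditioned on $\{\bar\theta\in B\}$ the bad event $\{\exists m\ge 0:\rho_m\ge\delta_B\}$ has probability at most $2d\,e^{-\hat K\delta_B^2/(d\,s(n_0))}+2d\,e^{-\hat C\delta_B^2/(d\,s(n_0))}\le 4d\,e^{-c\epsilon^2/s(n_0)}$, the constant $c$ absorbing $\hat K,\hat C,d,T,\alpha$ and the fixed ratio $\delta_B/\epsilon$; condition (2), $s(n_0)<c\epsilon^2/\ln(4d/\gamma)$, drives this below $\gamma$, while condition (1) is what forces the deterministic drift $Ca(n_0)+K_TCLs(n_0)$ of (\ref{rho}) together with the $\|B_j-B_{n_m}\|$ and $\|C_j-C_{n_m}\|$ contributions to stay inside the $\delta_B$ budget via (\ref{1})--(\ref{large}). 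On the complement of the bad event the recursion of the previous paragraph holds for every $m$, so $\beta_{m^*}<2\epsilon$ and the same $T$-interval stability keeps $\|\theta_{n_0+N_0}-\theta^*\|<2\epsilon$, giving the claimed $1-\gamma$ bound. I expect the main obstacle to be precisely this budgeting: choosing $\delta_B$ so that the deterministic errors, the geometric residual $\delta_B/(1-e^{-(1-\alpha)T})$, and the inter-sample wandering all fit within $2\epsilon$ while the summed concentration stays under $\gamma$; and, because the noise is Markov rather than a martingale difference, re-verifying that $\rho_m$ obeys the same concentration inequality requires re-invoking the full $A_n,B_n,C_n,D_n$ decomposition and the $T$-interval boundedness of Lemma \ref{T_stability} rather than quoting the martingale case directly.
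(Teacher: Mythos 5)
Your proposal is correct and follows essentially the same route as the paper, which proves this corollary simply by invoking the sample-complexity argument of \cite[Chap.~4, Corollary~14]{borkar} with the lock-in probability bound of Theorem \ref{main_thm} substituted for the martingale-noise estimate; your reconstruction of the contraction recursion $\beta_{m+1}\le e^{-(1-\alpha)T}\beta_m+\rho_m$, the interval count $(1-e^{-(1-\alpha)T})^{-1}$, and the budgeting of $\delta_B$ against $\epsilon$ and of $4d\,e^{-c\epsilon^2/s(n_0)}$ against $\gamma$ is exactly the content of that cited argument, merely spelled out in detail the paper leaves to the reference.
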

\begin{rmk}
The results clearly show large vs. small step-size trade-off for non-asymptotic rate of convergence
well-known in the stochastic convex optimization literature \cite{nemro}.
For large step-size, the algorithm will make fast progress whereas the errors due
to noise/discretization  will be much higher simultaneously.
However,
our results show the quantitative estimate of this progress and the error. For the large
step-size case, $n_0$ satisfying the hypothesis in Corollary 6.1 will be 
higher whereas $N_0$ will be lower compared to the small-step size while the opposite is true
for the small step-size case. Therefore the optimal step-size should be somewhere in between.
\end{rmk}

However, it is not possible to calculate accurately the threshold $n_0$ as the constants such as 
$C, \hat{K}$ depend on $B$ which
indeed depends on $\theta^*$. If we consider some special cases where the range for $\theta^*$
is given although the actual $\theta^*$ is unknown, we can replace the terms involving constants in (\ref{large}) by a
single constant $M$. For those
cases the following analysis will be useful.

In the following we state an upper bound $N'_0$ of
$N_0 + n_0$ when $a(n) = \frac{1}{n^k}, \frac{1}{2} < k < 1$ under the following crucial assumption:
\begin{enumerate}[label=\textbf{(T\arabic*)}]
 \item $P(\theta_{n_0} \in B) =1$.
\end{enumerate}

%Let a desired accuracy $\epsilon >0$ and confidence $0<\gamma<1$ be given.
Let $\alpha = 0.9$. Under the assumptions made, the estimates of $n_0$ and $N'_0$ are
\footnotesize
\begin{align}
\nonumber
n_0  = \max ((\frac{M}{\epsilon})^{\frac{1}{k}},\ (\frac{M}{\epsilon(2k-1)})^{\frac{1}{2k-1}},(\frac{M}{\epsilon^2(2k-1)})^{\frac{1}{2k-1}},(\frac{M}{\epsilon})^{\frac{2}{k}}, \\
\ (\frac{M(\ln (\frac{1}{\gamma}))}{\epsilon^2(2k-1)})^{\frac{1}{2k-1}},\ (\frac{2Mk}{\epsilon(2k-1)})^{\frac{1}{2k-1}}), \\
N'_0 = \left(\left(n_0\right)^{\left(1-k\right)}+ 15.16(1-k)\right)^{\frac{1}{1-k}}.
\end{align}
\normalsize
Then from $N'_0$ onwards the iterates will be within $2\epsilon$ of $\theta^*$ with probability at least $1-\gamma$.
Note that the minimum value of the quantity $\frac{2(T+1)}{(1-e^{-(1-\alpha)T})}$ for $\alpha =0.9$ is  $15.16$.

To understand what should be the optimal step-size i.e. the value of $k$ for which $N'_0$ will be minimum, we plot $N'_0$ as
a function of $k$ for two different values of $M$ each with two different values of $\epsilon$ (see Figs. \ref{fig1} and \ref{fig2}).
%As discussed earlier,
%rather than looking into the actual values of the sample complexity, the shape of the curve is much more important.

\begin{figure*}
\centering
\subfigure[$\epsilon =0.01$]{\includegraphics[width = .49\linewidth]{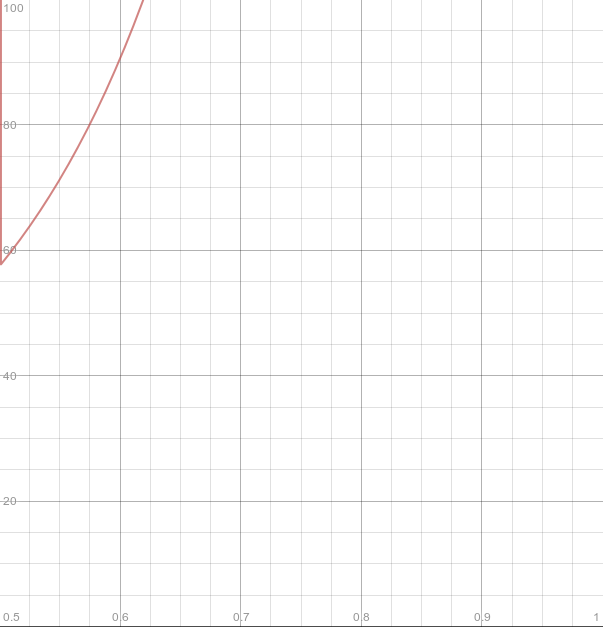}} \hfill
\subfigure[$\epsilon =0.001$]{\includegraphics[width = .49\linewidth]{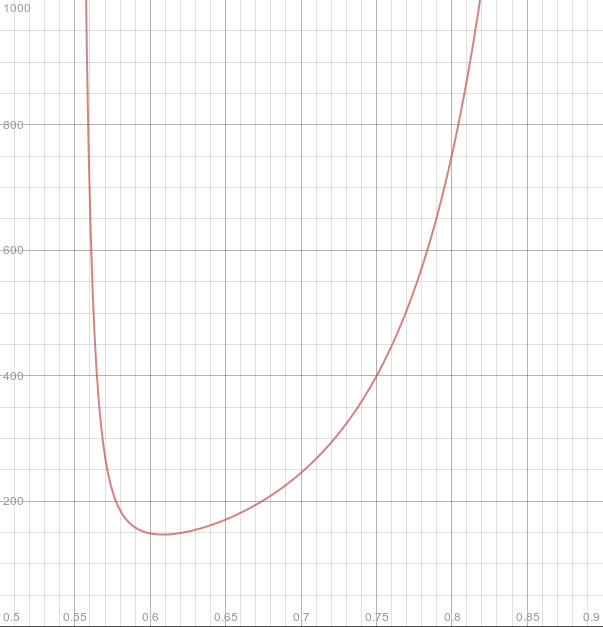}}
%\subfloat[$\epsilon =0.01$, $\alpha$ = 0.1]{\includegraphics[width = 4in]{/media/6428C06128C03438/Prasenjit_lockin/lock_in_Markov iterate-dependent/figure/6.662/01.png}}
\caption{Sample complexity vs. step-size parameter; $y: N'_0, x: k, \gamma =0.1$, $M$ = 1E-07}
\label{fig1}
\end{figure*}
\begin{figure*}
\centering
\subfigure[$\epsilon =0.01$]{\includegraphics[width = .49\linewidth]{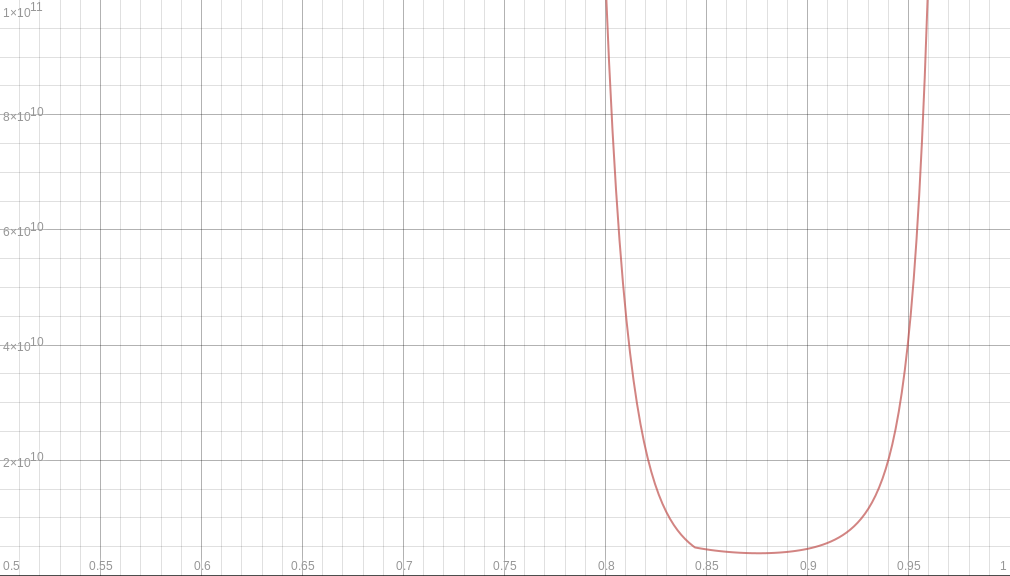}} \hfill
\subfigure[$\epsilon =0.001$]{\includegraphics[width = .49\linewidth]{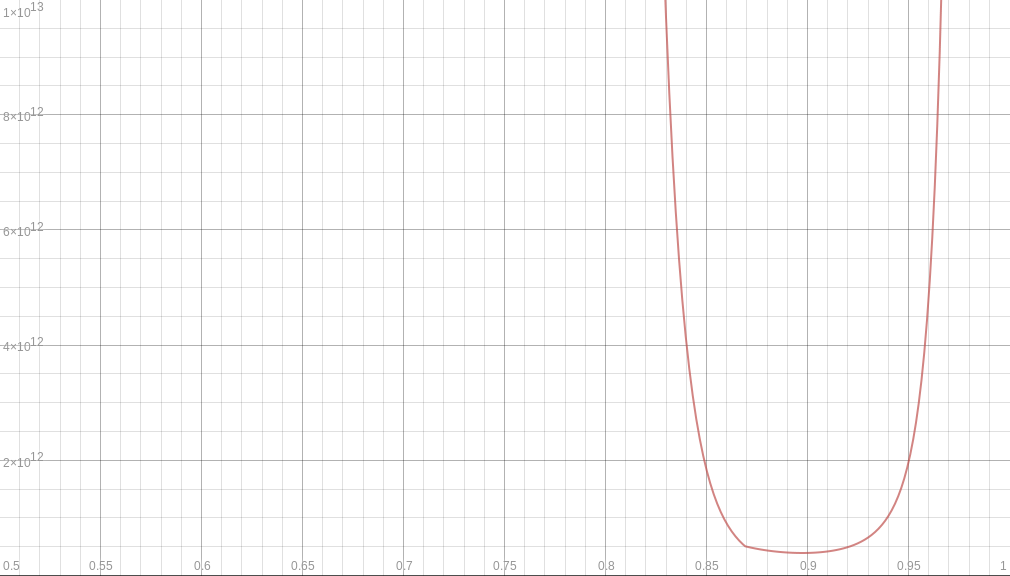}}
%\subfloat[$\epsilon =0.001$, $\alpha$ = 0.5]{\includegraphics[width = 4in]{/media/6428C06128C03438/Prasenjit_lockin/lock_in_Markov iterate-dependent/figure/9.43/001.png}}
%\\subfloat[fig 3]{\includegraphics[width = 3in]{something}}
%\subfloat[fig 4]{\includegraphics[width = 3in]{something}}
\caption{Sample complexity vs. step-size parameter; $y: N'_0, x: k, \gamma =0.1$, $M$ = 100}
\label{fig2}
\end{figure*}
From the graph it is clear that for large values of $M$, the optimal $k$ is biased towards $1$ whereas for small values of $k$ it
is biased toward $\frac{1}{2}$. The reason is that for large $M$, with large step-size, $n_0$ will be much higher although $N'_0 -n_0$
is small whereas
with very small $M$, even if we use large step-size, $n_0$ will not be large, thus one can take advantage of
$N'_0 -n_0$ being small.

\section{Conclusion}
In this paper, we describe asymptotic and non-asymptotic convergence analysis of
stochastic approximation recursions with  Markov iterate-dependent noise using the
lock-in probability framework.
Our results show that we are able to recover the same bound available for lock-in probability in the literature
for the much stronger i.i.d noise case. Such results are used to calculate sample complexity estimate
%non-asymptotic rate of convergence ``involving''
of such stochastic approximation recursions which are then used for predicting the optimal step size.
Moreover, our results are extremely useful to prove almost sure convergence to specific attractors
in cases where asymptotic tightness
of the iterates can be proved easily.
%Moreover, using our results, we provide a statement similar to
%Kushner-Clarke Lemma however without the assumption of the stability of the iterates.
An interesting future direction will be to extend this
analysis to two-timescale scenarios with coupling both ways between the recursions and, both with and without Markov iterate-dependent noise.

\appendices
%and then carry on using the \section and \subsection commands, as above.

\section{Proof of conditional and maximal version of Azuma's inequality}
Let $P_B$ denote probability measure defined by $ P_B(A)=\frac{P(A\cap B)}{P(B)}$ where $B \in \mathcal{F}_1$.
If we can show that with this new probability measure $\{S_n\}$ is a martingale, then we can follow the steps in
\cite[(3.30), p~227]{mcdarmiad} to conclude the proof.

Let us denote by $ E_B$ the expectation with respect to $ P_B$. Clearly,
$E_B(X)= \frac{\int_BX dP}{P(B)}$.
Let $G \in \mathcal{F}_n$.
Now,
\begin{align}
\nonumber
\int_G E_B[S_{n+1}|&\mathcal{F}_n]dP_B = E_B[E_B[I_G S_{n+1}|\mathcal{F}_n]] \nonumber \\
                   &=E_B[I_G S_{n+1}] = \frac{E[I_{G \cap B} S_{n+1}]}{P(B)} \nonumber \\
                   &=\frac{E[I_{G \cap B}E[S_{n+1}|\mathcal{F}_n]]}{P(B)} \nonumber \\
                   &=\frac{E[I_{G \cap B} S_n]}{P(B)} = \int_G S_n dP_B. \nonumber
\end{align}
\section{General discrete Gronwall inequality}
Let $\{\theta_n,n\geq 0\}$ (respectively $\{a_n, n\geq 0\})$ be non-negative (respectively positive)
sequences, $L \geq 0$ and $f(n)$ be a increasing function of $n$ such that for all $n$
\begin{align}
\theta_{n+1} \leq f(n) + L (\sum_{m=0}^{n}a_m \theta_m)\nonumber.
\end{align}
Then for $T_n = \sum_{m=0}^{n}a_m$,
\begin{align}
\theta_{n+1} \leq f(n) e^{LT_n} \nonumber
\end{align}
\begin{proof}
Similar to the proof of Lemma 8 in Appendix B of \cite{borkar_book}
\end{proof}

\begin{IEEEbiography}[{\includegraphics[width=1in,height=1.25in,clip,keepaspectratio]{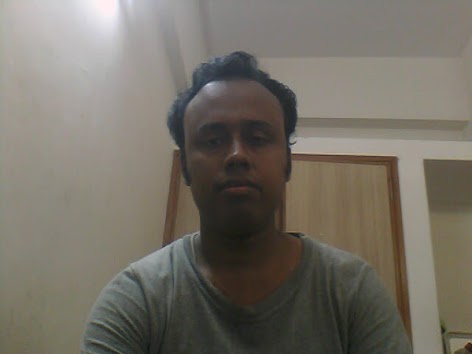}}]%
{Prasenjit Karmakar}

received the Master’s and Ph.D.
degrees in computer science and automation
from the Indian Institute of Science in 20012 and
2018, respectively.
He will soon join as a postdoctoral researcher at
EE Technion. His research interests are in reinforcement learning, stochastic
approximation and applied probability.
\end{IEEEbiography}

\begin{IEEEbiography}[{\includegraphics[width=1in,height=1.25in,clip,keepaspectratio]{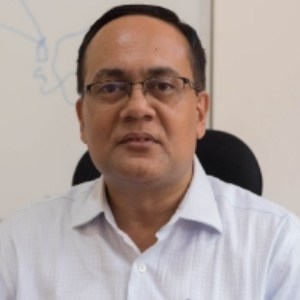}}]%
{Shalabh Bhatnagar}

received the Bachelor’s (Hons.) degree in physics from the University of Delhi, Delhi, India, in 1988 and the Master’s and Ph.D. degrees in electrical engineering
from the Indian Institute of Science, Bangalore,
in 1992 and 1997, respectively.
He is a Professor and Chairman with the Department of
Computer Science and Automation as well as Robert Bosch Centre for Cyber Physical Systems, Indian Institute of Science, Bangalore. His research interests are in stochastic approximation theory and
stochastic optimization and control, as well as
applications in communication, wireless, and vehicular traffic networks.
\end{IEEEbiography}

\end{document}